\numberwithin{equation}{section}
\theoremstyle{plain} 
\newtheorem{theorem}[equation]{Theorem}
\newtheorem{corollary}[equation]{Corollary}
\newtheorem{lemma}[equation]{Lemma}
\newtheorem{proposition}[equation]{Proposition}
\theoremstyle{definition}
\newtheorem{example}[equation]{Example}
\newtheorem{conjecture}[equation]{Conjecture}
\theoremstyle{remark}
\newtheorem{remark}[equation]{Remark}
\def\C{{\mathbb{C}}}
\def\P{{\mathbb{P}}}
\title{Holomorphic vector fields and rationality}
\author{Wenchuan Hu}
\keywords{Holomorphic vector field,  rationality}
\address{
School of Mathematics\\
Sichuan University\\
Chengdu 610064\\
P. R. China
}
\email{huwenchuan@gmail.com}
\begin{document}

\begin{abstract}
We show that a nonsingular  complex projective  variety  admitting a holomorphic
vector field with nonempty isolated zeroes, is rational  using a key  technique by Harvey-Lawson on finite volume flows.
This statement was conjectured by J. Carrell. By the same technique, we obtain a uniform  upper bound of Betti numbers of
nonsingular complex projective  variety  admitting a holomorphic
vector field with exact one zero point. Such an upper bound  depends only on the dimension of the variety, which is a stronger
version of a result of Akyildiz and Carrell.

\end{abstract}

\maketitle
\pagestyle{myheadings}
 \markright{Holomorphic vector fields and rationality}


\section{Introduction}
The action of  group actions on manifolds is one of long lasting research subject in mathematics.
Almost every  topic in algebra, geometry and topology, there is a corresponding version with group actions, e.g., for a cohomology theory
there is an equivariant cohomology theory. In particular, the action of $1$-parameter group
on  manifolds is one of the simplest and widest concerned interesting branches in the field of group actions on topological spaces.



There are lots of significant results related  the structure of a complex analytic space or a complex manifold to its fixed point set
under the action of a complex $1$-parameter group.
The topological and analytic invariants  between a compact K\"{a}hler manifold or a singular projective variety
 and  those of  the fixed point set under an analytic action of
 a complex multiplicative group  can be found in \cite{Bialynicki-Birula1},\cite{Carrell-Goresky},\cite{Frankel},  etc. and the references therein.

Bialynicki-Birula showed that any nonsingular projective variety $X$ under a complex multiplicative group $G_m$
action admits a canonical decomposition into $G_m$-invariant locally closed subvarieties. In particular,
the algebro-geometric invariants of the variety can be read from those of the fixed point set. When the fixed point set is isolated,
all the cohomology theories on $X$ (e.g., Chow groups, algebraic cycles modulo algebraic
equivalence, Lawson homology, singular homology) are naturally isomorphic (see  \cite{Bialynicki-Birula1}, \cite{Ellingsrud-Stromme}, \cite{Frankel}, \cite{Lima-Filho}).
Moreover, such a variety $X$ must be a rational variety.

However, the situation is much more subtle for a projective variety under the action of an algebraic  additive group.
The fact is that the understanding of the structure of a nonsingular complex projective variety $X$ through
the fixed point set under an additive group action  amounts to the understanding of the structure of $X$ through the zero locus of a
holomorphic vector field. The reason is that the structure of a nonsingular projective variety under
the action of multiplicative group is clearly
understood through the information of the fixed point set (see \cite{Lieberman2}).

Now let $V$ be a holomorphic vector field defined on a nonsingular complex projective algebraic variety
$X$. The zero subscheme  $Z$ is the subspace of $X$ defined by the ideal generated by $V\mathcal{O}_X$.

 Contrary to the real case,
 there are a lot of topological obstructions  to the existence of a holomorphic vector field on
 a complex projective variety.  Moreover, the structure of $X$  are closely related to that of $Z$ (see \cite[\S 5]{Carrell}).
  For examples, the rational cohomological ring  of $X$ can be determined by those of $Z$
 and the fundamental group of $X$ is perfect if $Z$ is simple (i.e., smooth
of dimension zero) (see \cite{Carrell-Lieberman}).


 A fundamental problem in algebraic geometry  is to determine which varieties are rational.
One of the most famous problems in the direction, formulated by J. Carrell,  is the following conjecture.

\begin{conjecture}[\cite{Carrell}, p.33]\label{Conj1}
A smooth (or nonsingular) irreducible  projective algebraic variety $X$ admitting a holomorphic
vector field with nonempty isolated zeroes, is rational.
\end{conjecture}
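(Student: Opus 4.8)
The plan is to establish the conjecture by reducing it, through the Jordan decomposition inside $\mathrm{Aut}(X)^{\circ}$, to the case of a $\G_{a}$--action, and then to treat that case with Harvey--Lawson's finite--volume--flow machinery. Write $n=\dim X$. Since $X$ is compact, $V$ is complete; let $G\subseteq\mathrm{Aut}(X)^{\circ}$ be the Zariski closure of the one--parameter subgroup $\{\exp(tV):t\in\C\}$, a connected commutative algebraic group, and note that the zero set of $V$ equals the fixed locus $X^{G}$ (a point is annihilated by $V$ precisely when it is fixed by the dense subgroup $\{\exp(tV)\}$, hence by $G$). Write $V=V_{s}+V_{n}$ for the semisimple and nilpotent parts. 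If $V_{s}\neq 0$ then $V_{s}$ lies in the Lie algebra of a nontrivial torus $T\subseteq\mathrm{Aut}(X)^{\circ}$; the fixed locus $Y:=X^{T}$ is then smooth of dimension $<n$, the Bialynicki--Birula decomposition for a generic one--parameter subgroup of $T$ presents $X$ as birational to an affine--space bundle over a connected component $Y_{0}$ of $Y$, and $Y_{0}$ is a smooth projective variety carrying the holomorphic vector field $V|_{Y_{0}}=V_{n}|_{Y_{0}}$, which is nilpotent and whose zero set --- the fixed locus, on the projective variety $Y_{0}$, of the unipotent group generated by $V_{n}$ --- is nonempty and lies in $X^{G}$, hence is isolated. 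So by induction on $n$ it is enough to treat the case $V=V_{n}$ nilpotent, i.e.\ the case in which $X$ carries a nontrivial $\G_{a}$--action with isolated fixed points, and to show that such an $X$ is rational.

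For the $\G_{a}$--case, fix the action $\varphi\colon\G_{a}\times X\to X$. As $X$ is projective, every orbit map $\G_{a}\to X$ extends over the point at infinity, so for the real flow $\varphi_{t}(x)=\varphi(tu,x)$, $t\in\R$ (with any $u\in\C^{\ast}$), the limit $\varphi_{\infty}(x)=\lim_{t\to\infty}\varphi_{t}(x)$ exists and is a fixed point; the fibres $U_{p}=\varphi_{\infty}^{-1}(p)$, $p\in Z$, are constructible and partition $X$, and exactly one of them, $U:=U_{p_{0}}$, is Zariski--open and dense, swept out by the rational curves $\overline{\G_{a}\cdot x}$ through $p_{0}$. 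The crucial point is that this flow has \emph{finite volume} in the sense of Harvey--Lawson: each orbit closure is a rational curve of degree bounded independently of the base point, so the image of $\{(x,\varphi_{t}(x)):x\in X,\ t\ge 0\}$ in $X\times X$ has finite $(2n+1)$--volume. Harvey--Lawson's finite--volume--flow theorem then yields on $X\times X$ the current identity
\[
[\Delta_{X}]-\mathcal{P}_{\infty}=d\mathbf{T},
\]
where $\mathbf{T}$ is the finite--mass current swept out by the flow and $\mathcal{P}_{\infty}$ is its $d$--closed limit current, supported over $Z$ and containing the summand $[X\times\{p_{0}\}]$.

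It remains to prove that $U$ is rational, and this is the main obstacle. Since the $\G_{a}$--orbits partition $X$ and the orbit closures through $p_{0}$ are rational curves sweeping out $U$, passing to the orbit space gives a birational equivalence between $U$ and $B\times\A^{1}$ with $\dim B=n-1$, so the problem becomes the rationality of $B=U/\G_{a}$. The idea is to extract this from the Harvey--Lawson identity \emph{as an equality of currents}, not merely of homology classes: blowing up $X$ along the scheme $Z$ and renormalising the flow near $p_{0}$, one wants the rescaled flow to identify $B$ --- hence $U$, hence $X$ --- birationally with a projective space, the swept current $\mathbf{T}$ being precisely what controls the degeneration of the graphs $(\mathrm{id}\times\varphi_{t})_{*}[X]$ and forbids a drop of dimension along the exceptional locus. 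The difficulty is genuine: a priori the Harvey--Lawson formula constrains only cohomology, and cohomological hypotheses by themselves can never force rationality --- fake projective planes have the cohomology ring of $\P^{2}$ yet are of general type --- so the full $\G_{a}$--structure and the explicit flow must enter the argument in an essential geometric way. Making this precise, that is, identifying the birational model produced by the flow with a projective space, is where the weight of the proof lies.
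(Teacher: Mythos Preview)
Your reduction to the $\G_a$--case via Jordan decomposition and Bia{\l}ynicki--Birula is fine; this equivalence (Conjecture~\ref{Conj1} $\Leftrightarrow$ Conjecture~\ref{Conj2}) is exactly what the paper invokes, citing \cite{Lieberman1} and \cite{Konarski}, so no disagreement there.

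The genuine gap is the one you yourself flag in the last paragraph. You set up the Harvey--Lawson current identity $[\Delta_X]-\mathcal{P}_\infty=d\mathbf{T}$ on $X\times X$ and then try to extract rationality by passing to the orbit space $B=U/\G_a$, reducing to the rationality of a lower-dimensional $B$. But $B$ carries no obvious $\G_a$--action with isolated fixed points, so there is no induction available, and as you correctly observe, the current identity read only cohomologically cannot force rationality. Your sketch of ``blowing up along $Z$ and renormalising the flow'' to identify $B$ with projective space is not an argument; it is a statement of what one would like to be true, and you give no mechanism for it. So the proposal stops precisely where the real content begins.

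The paper's route is different in a decisive way: rather than reading Harvey--Lawson globally on currents, it restricts the operator equation $I-P=d\circ T+T\circ d$ to the \emph{local ring} $\mathcal{O}_{X,p}$ at the unique fixed point. On germs of regular functions one has $T\circ d$ acting as an honest operator with $f-f(p)=T(df)$ (Lemma~\ref{lemma2.2}), and $T$ carries $\Omega^1_{X,p}$ to $m_p$. Choosing a basis $\omega_i$ of $T_p^*X$ and setting $y_i:=T(\omega_i)$ produces a candidate system of local coordinates; the paper then builds a ring map $\psi^*:\mathcal{O}_{X,p}\to\mathcal{O}_{\C^n,o}$ and checks, by induction on the graded pieces $m_p^k/m_p^{k+1}$ using the commutative diagram~\eqref{eq2.7}, that $\psi^*$ is an isomorphism (Lemmas~\ref{lemma8}, \ref{lemma9}). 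An isomorphism of local rings gives an isomorphism of function fields, hence rationality. The point you are missing is that the Harvey--Lawson operator $T$, applied to $0$--forms and $1$--forms \emph{locally at $p$}, already manufactures the birational map to $\C^n$; there is no need to analyse the orbit space or to control degenerations of graphs globally.
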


This conjecture  stated in an open problem format, can be found in an earlier literature (see  \cite{Lieberman1}).

During last fourty years, there have been lots of progress in all kinds of special cases.
For surfaces, this conjecture is proved in \cite{Carrell-Howard-Kosniowski}, \cite[p.99]{Lieberman2}.
For $X$ with $\dim X\leq 5$, it was proved by Hwang (see \cite{Hwang1}).
 Hwang also mentioned in his paper that M. Koras had proved the conjecture  for
dimension less than or equals to 4 in an unpublished paper in a possibly quite different way.
Under the assumption that the automorphism group of the manifold is semisimple, it was proved by Deligne (see \cite[p.32]{Carrell}).
Under the assumption that the induced action on the tangent space at the isolated zero has a single
Jordan block, it was proved by  Konarski ([K]). Under the assumption that  the holomorphic vector field  has a totally degenerate zero,
it was proved by  Hwang (\cite{Hwang2}).
This conjecture has been studied by Lieberman (see \cite{Lieberman1}, \cite{Lieberman2}),
who had shown that it holds if every point in $Z$ is
non-degenerate. Recall that a  point in $Z$ is called non-degenerate if
it is a nonsingular point on $Z$ and  the $\mathcal{O}_{X, Z}$-linear map $L_V:I_Z/I_Z^2\to I_Z/I_Z^2$ has nonzero determinant on $Z$.
Note that a holomorphic vector field obtained from the flow of a multiplicative group action is always non-degenerate.

A stronger version of the conjecture (might be also made by Carrell) that every smooth projective variety
admitting a holomorphic vector field with nonempty
isolated zeros also admits a $\C^*$-action with
nonempty isolated zeros.

Conjecture \ref{Conj1}, as is well known,  is equivalent to the following(\cite{Lieberman1},\cite{Konarski}):
\begin{conjecture}\label{Conj2}
If $X$ is a nonsingular projective variety admitting an algebraic additive group
action having exactly one fixed point, then $X$ is rational.
\end{conjecture}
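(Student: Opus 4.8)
\noindent The plan is to study the real flow generated by the holomorphic vector field and to run it through the Harvey--Lawson theory of finite volume flows, using the equivalence with Conjecture~\ref{Conj1} to pass freely between an isolated zero subscheme $Z$ of a field $V\in H^0(X,T_X)$ and an algebraic $\G_a$-action with a single fixed point. First I would fix such a $V$; since $X$ is compact, $V$ integrates to a holomorphic one-parameter group $z\mapsto\exp(zV)$ of automorphisms, and restricting the parameter to $\R$ gives a real-analytic flow $\{\phi_t\}_{t\in\R}$ whose fixed locus is exactly $\supp Z$. The first technical step is to verify the finite-volume hypothesis: the image of the graph map $\Phi\colon[0,\infty)\times X\to X\times X$, $(t,x)\mapsto(x,\phi_t(x))$, should have finite $(2\dim X+1)$-dimensional volume. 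I expect this to reduce, via a \L{}ojasiewicz-type estimate and compactness, to controlling the flow near the finitely many zeros, where isolatedness is precisely what forbids the accumulation of orbit closures that would make the volume infinite; should this be delicate for an arbitrary $V$, one may first replace it by a vector field with the same zero set but a tamer flow, which is harmless since ``isolated zeros'' is an open condition on $H^0(X,T_X)$ and rationality does not depend on the chosen field.

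Granting finite volume, Harvey--Lawson furnish the total flow current $\mathcal{T}=\Phi_*\bigl([0,\infty)\times X\bigr)$, the limit current $\Gamma_\infty$ obtained as the current-limit of the graphs of the $\phi_t$ as $t\to\infty$, and the fundamental identity
\[
 d\mathcal{T}=\Gamma_\infty-[\Delta_X]
\]
on $X\times X$. The next step is to extract the geometry of $\Gamma_\infty$. Finite volume forces the forward orbit of almost every point to converge to $\supp Z$, so with a single fixed point $p$ the stable set $S_p=\{x:\phi_t(x)\to p\}$ is dense; by a theorem of Bishop its closure, and the closures of the finitely many lower-dimensional ``cells'' making up $X\setminus S_p$, are genuine projective subvarieties, and $\Gamma_\infty$ is a cycle of complexity bounded in terms of $\dim X$, supported on products formed from these cells and from the flag of subvarieties attached to the Jordan structure of the linearization $L_V$ at $p$. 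In particular $S_p$ is a smooth, Zariski-dense, $\G_a$-invariant open subset of $X$ of dimension $\dim X$, it is contractible (the flow retracts it onto $p$), and the generic orbit of $\G_a$ inside it is an affine line; thus $\Gamma_\infty$ contains the distinguished top piece $[\overline{S_p}]\times[p]=[X]\times[p]$.

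Two conclusions should now follow. Pairing the displayed identity with K\"unneth forms $p_1^{*}\alpha\wedge p_2^{*}\beta$ shows that the diagonal class, hence a Poincar\'e-dual basis of $H^{*}(X;\Q)$, is carried by the support of $\Gamma_\infty$; as the number and dimensions of the pieces of that support depend only on $n=\dim X$, so does $\sum_i b_i(X)$, which is the desired uniform strengthening of Akyildiz--Carrell. For rationality one must go further and show that the big cell $S_p$ --- equivalently $X$ itself --- is rational; the natural plan is to produce a Zariski-dense copy of $\C^{\dim X}$ in $X$ by analyzing how the additive action sweeps out $S_p$ near $p$, refining the normal-form arguments already known to suffice when $L_V$ consists of a single Jordan block or when the zero is totally degenerate, possibly by passing to a generic quotient of $S_p$ by the $\G_a$-action (a generically trivial $\G_a$-torsor) and inducting on dimension.

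The main obstacle is exactly this last point. For a \emph{multiplicative} group the analogous cells are automatically affine spaces and the Bialynicki-Birula decomposition gives rationality for free; for an \emph{additive} group action with a possibly very degenerate fixed point no such structure is at hand, and it is not even enough to know that the big cell is a smooth contractible affine variety, since there exist exotic affine spaces --- carrying $\G_a$-actions --- that are not isomorphic to $\C^n$ (although the known such examples remain rational). The genuine content must therefore be a hands-on analysis of the additive action on $S_p$, which is precisely where the difficulty of Carrell's conjecture has always been concentrated; the role of the finite-volume flow is to guarantee at the outset that the relevant cell is Zariski-dense, smooth, contractible and quasi-projective --- the tameness without which no such analysis could begin.
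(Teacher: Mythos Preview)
Your proposal has a genuine gap, and you identify it yourself in the final paragraph: after using the Harvey--Lawson machinery to produce a Zariski-dense, smooth, contractible, $\G_a$-invariant open cell $S_p$, you have no mechanism to conclude that $S_p$ (hence $X$) is rational. You correctly observe that contractibility of a smooth affine variety does not imply rationality, and that the normal-form arguments you cite (single Jordan block, totally degenerate zero) are exactly the special cases already in the literature. So what you have written is a clear and honest account of where the problem stands \emph{before} the paper's contribution, together with the Betti-number bound---but not a proof of the conjecture.

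The paper's idea is to use the Harvey--Lawson operator equation in a completely different register. Rather than pairing $d\mathcal{T}=\Gamma_\infty-[\Delta_X]$ against cohomology classes and reading off a cell decomposition, the paper restricts the identity $I-P=d\circ T+T\circ d$ to degree zero, i.e.\ to germs of regular functions at the fixed point $p$. On $\mathcal{O}_{X,p}$ this becomes $f-f(p)=T\circ df$, so $T$ furnishes a canonical ``antiderivative'' on local $1$-forms. Choosing a basis $\omega_1,\dots,\omega_n$ of $T_p^*X$ and setting $y_i:=T(\omega_i)\in m_p$ produces a system of local parameters, and the paper builds from this a ring homomorphism $\psi^*\colon\mathcal{O}_{X,p}\to\mathcal{O}_{\C^n,o}$. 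The commutation of $\psi^*$ with $d$ and with the two operators $T$, $T_o$ is then leveraged, via a graded induction on $m_p^k/m_p^{k+1}$, to show $\psi^*$ is an isomorphism of local rings; passing to fraction fields gives $K(X)\cong K(\C^n)$ and hence rationality. In short, the paper never attempts to prove that the big cell is an affine space---it bypasses the global geometry entirely and instead uses $T$ as an algebraic gadget on the local ring at $p$. Your approach and the paper's share the Harvey--Lawson input but diverge immediately afterward, and the missing step in yours is precisely what the local-ring argument supplies.
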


We will show below this conjecture to be true by applying techniques developed by Harvey and Lawson to the ring  $\mathcal{O}_{X,p}$ of local regular functions of a point on $X$, to get an isomorphism
between $\mathcal{O}_{X,p}$ and that of a point on $\C^n$.

As  applications of  techniques developed by Harvey and Lawson, we obtain a uniform upper bound $C_n=2^n$ for the Euler number
 and Hodge numbers of $X$ of dimension $n$, where $X$ is a nonsingular projective variety  admitting an algebraic additive group action with exactly one fixed point (see Theorem \ref{Th3.2}).

 In particular, we have  $b^2(X)\leq \dim X$ for such an $X$.
 This is a generalization of an ``unexpected consequence" of  Akyildiz and Carrell(see \cite[Th.2]{Akyildiz-Carrell}). In Akyildiz and Carrell theorem,  much more stronger conditions for $X$ are assumed.
  We give a classification of surfaces  and a partial  classification of threefolds  of  admitting a $\C$-action with exact one fixed point.

\section{Proof of Carrell's conjecture}
In this section we will show  Conjecture \ref{Conj2} and hence  Conjecture \ref{Conj1} hold.
From now on, let $X$ be a smooth complex projective variety of dimension $n$ admitting a $\C$-action with exactly one fixed point $p$, i.e, $X^{\C}=p$.

Let $m_p\subset \mathcal{O}_{X,p}$ (resp. $m_o\subset \mathcal{O}_{\C^n,o}$) be the corresponding maximal ideal  of local algebras.

Note that the  flow $\phi_t$ induced from the $\C$-action generates a finite volume complex graph $\mathcal{T}$ in the sense of Harvey and Lawson (see \cite[\S9]{Harvey-Lawson}), where
$$
\mathcal{T}:=\{(t,\phi_t(x),x)\in \C\times X\times X|t\in \C  ~{\rm and}~ x\in X \}\subset \P^1\times X
\times X
$$
 and its closure $\overline{\mathcal{T}}\subset  \P^1\times X \times X$ is a projective variety.  According to their theory, this $\overline{\mathcal{T}}$ provides us an operator equation
 \begin{equation}\label{eq2.1}
 I-P=d\circ T+T\circ d:\mathcal{E}^k(X)\to \mathcal{D}'^k(X)
 \end{equation}
for $k\geq 0$, where $\mathcal{E}^k(X)$ is the space of  smooth $k$-forms on $X$, $\mathcal{D}'^k(X)$ is the space of $k$-currents on $X$,
 $I$ is the identity operator, $P:\mathcal{E}^k(X)\to \mathcal{D}'^k(X)$ is given by $P\alpha=\lim_{t\to \infty}\phi_t^*(\alpha)$ and $T:\mathcal{E}^k(X)\to \mathcal{D}^{k-1}(X)$
is the operator given by $\overline{\mathcal{T}}$ (see \cite[\S 2]{Harvey-Lawson}).

When restricting to the local ring of $\C$-algebra $\mathcal{O}_{X,p}$, we get the formula
\begin{equation}\label{eqn2.2}
 I-P=T\circ d:\mathcal{O}_{X,p}\to \mathcal{O}_{X,p}
 \end{equation}
since $T=0$ on $\mathcal{E}^0(X)=C^{\infty}(X)$ the set of smooth functions on $X$ and hence $T=0$ on $\mathcal{O}_{X,p}$
since an element $f$ in $\mathcal{O}_{X,p}$ can be extended to a smooth function on $X$.
The operator $P$ on $f\in C^{\infty}(X)$ is the constant function with the value $f(p)$  since
 $$(Pf)(x)=\lim_{t\to \infty} \phi_t^*(f)(x)=\lim_{t\to \infty}f\circ \phi_t(x)=f(\lim_{t\to \infty}\phi_t(x))=f(p).$$

The details are given in the following lemma, where Equation \eqref{eqn2.2} is applied to  local functions in $\mathcal{O}_{X,p}$.
\begin{lemma}\label{lemma2.2}
One gets the following formula
\begin{equation}\label{eqn2.4}
f-f(p)=T\circ df, f\in \mathcal{O}_{X,p}.
\end{equation}
\end{lemma}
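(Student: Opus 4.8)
The plan is to obtain \eqref{eqn2.4} as the degree-zero instance of the Harvey--Lawson identity \eqref{eq2.1}, read off on germs at $p$; the only step that is not automatic is to replace the germ $f\in\mathcal{O}_{X,p}$ by a genuine global smooth function before feeding it to \eqref{eq2.1}.

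So I would first choose an affine open neighborhood $U\ni p$ on which $f$ is regular (hence holomorphic), pick $\rho\in C^{\infty}(X)$ with $\supp\rho\subset U$ and $\rho\equiv 1$ on a smaller neighborhood $U'\ni p$, and set $\tilde f:=\rho f\in C^{\infty}(X)=\mathcal{E}^{0}(X)$; this is a global smooth function with $\tilde f|_{U'}=f|_{U'}$. Applying \eqref{eq2.1} with $k=0$ to $\tilde f$, the term $d\circ T$ contributes nothing since $T\tilde f$ is a $(-1)$-current, i.e. $T\tilde f=0$, so \eqref{eq2.1} collapses to $\tilde f-P\tilde f=T(d\tilde f)$, which is \eqref{eqn2.2} applied to $\tilde f$. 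Next I would identify $P\tilde f$: for every $x\in X$ one has $\phi_{t}(x)\to p$ as $t\to\infty$ (the closure of a nontrivial $\C$-orbit is a rational curve meeting its complement in a single fixed point, which is $p$ because $X^{\C}=p$), so by continuity of $\tilde f$
$$(P\tilde f)(x)=\lim_{t\to\infty}\tilde f(\phi_{t}(x))=\tilde f(p)=f(p);$$
since $\phi_{t}^{*}\tilde f$ is uniformly bounded this pointwise convergence is also convergence of currents, so $P\tilde f$ is the constant function $f(p)$ and $\tilde f-f(p)=T(d\tilde f)$ as currents on $X$.

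Finally I would restrict this global identity to $U'$: there $d\tilde f=df$ and $\tilde f=f$, so the current $T(d\tilde f)$ is represented on $U'$ by the regular function $f-f(p)$; in particular it defines a germ in $m_{p}\subset\mathcal{O}_{X,p}$, depending only on the germ of $f$ and not on the extension $\rho$, and equal to $f-f(p)$. Reading $T\circ df$ in \eqref{eqn2.4} as this germ gives the lemma.

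The step that deserves the most care is this last identification. The operator $T$ is the global ``integration over the graph $\overline{\mathcal{T}}$'' operator and is not a priori local, so one cannot directly assert that $T(df)$ sees only the germ of $df$; the argument must go through the already-established global equality $T(d\tilde f)=\tilde f-f(p)$ to conclude both that the value near $p$ is local and that it is again a germ of a regular function. The two remaining points --- the degree count that kills $T$ on functions, and the identification $P=\mathrm{ev}_{p}$, which needs $\phi_{t}\to p$ --- are forced by the set-up and are essentially recorded already in the discussion preceding the lemma.
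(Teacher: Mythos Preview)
Your proposal is correct and follows essentially the same route as the paper: extend the germ to a global smooth function, apply the Harvey--Lawson identity \eqref{eq2.1} in degree zero (where $d\circ T$ vanishes and $P$ is evaluation at $p$), and restrict back to a neighborhood of $p$. The only cosmetic difference is that the paper checks well-definedness by taking two arbitrary smooth extensions $F,\tilde F$ and observing that $F-\tilde F=T\circ d(F-\tilde F)$ forces $T\circ d(F-\tilde F)$ to vanish near $p$, whereas you fix one explicit cutoff extension and read off independence from the global equality $T(d\tilde f)=\tilde f-f(p)$ itself; the content is the same.
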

\begin{proof}
Let $F,\tilde{F}\in C^{\infty}(X)$ be two extensions of the germ $f\in \mathcal{O}_{X,p}$. Then
we have $F-F(p)=T\circ dF$ and $\tilde{F}-\tilde{F}(p)=T\circ d\tilde{F}$ by Equation \eqref{eq2.1}. Since $F(p)=\tilde{F}(p)=f(p)$, we get
$F-\tilde{F}=T\circ d(F-\tilde{F})$. Since both $F$ and $\tilde{F}$ are the extension of $f$, there exists a neighborhood $U_p$ of $p$ such that
$F-\tilde{F}\equiv 0$ on $U_p$.  That is, the operator $T\circ d$ applies to $F-\tilde{F}$ is zero on on $U_p$.
Hence two different choices of extensions of $f\in \mathcal{O}_{X,p}$ yield the same local function whose
class is $f-f(p)$. This shows that $I-P=T\cdot d$ holds on $ \mathcal{O}_{X,p}$.
\end{proof}

Moreover, when restricted on $m_p$, we have
\begin{equation}\label{eqn2.5}
f=T\circ df, f\in m_p
\end{equation}
since  $f(p)=0$ for $f\in m_p$.

Let $\Omega^1_{X,p}$ be the space of local 1-forms with coefficients in $\mathcal{O}_{X,p}$, i.e., $\Omega^1_{X,p}=\{df|f\in \mathcal{O}_{X,p}\}$.
Since $X$ is a nonsingular variety, we have $T_p^*X\cong\Omega^1_{X,p}\otimes_{\mathcal{O}_{X,p}} \C\cong m_p/m_p^2$ as $\C$-vector spaces, where $T_p^*X$ is
the cotangent space of $X$ at $p$.

Since $p\in X$ is nonsingular, we have $m_p/m_p^2\cong T^*_pX$ as $\C$-vector spaces. Since $\dim_{\C} X=n$, there exists a $\C$-linear isomorphism
 $\psi^*:T^*_pX \stackrel{\cong}{\to} \C^n$. Let $\omega_i$($1\leq i\leq n$) be a basis of $T^*_pX$ such that $\psi^*(\omega_i)=dz_i$, where
 $z_i$($1\leq i\leq n$) is  a system of  local coordinates of $\C^n$ at the origin.

Set $y_i:=T(\omega_i)$, then $y_i\in m_p$ by Equation \eqref{eqn2.4}. Since $ T(dy_i)=y_i$  by Equation \eqref{eqn2.5}, we have $dy_i-\omega_i\in \ker(T)$.
We can choose $dy_1,...,dy_n$ as the basis of $ T^*_pX$. That is, we give a system of local coordinates $y_1,...,y_n$ for $X$ at $p$ such that
there exists a map $\psi: U_o\to U_p$ is defined  from a neighborhood $U_o \subset\C^n$ of $o$ to a neighborhood  $U_p\subset \in X$ of $p$.

 For $\omega\in \Omega^1_{X,p}$, we have $\omega=\sum_if_idy_i=df$ for some $f\in \mathcal{O}_{X,p}$, $T(\omega)=T(df)=f$, where
 $f_i=\frac{\partial f}{\partial y_i}$.

Now we extend the map $\psi^*:T^*_pX \stackrel{\cong}{\to} \C^n$ to $\psi^*:\Omega^1_{X,p}\to \Omega^1_{\C^n,o}$ as the following:
$$\psi^*(df)(z_1,...,z_n):=df(y_1,...,y_n), \forall df\in \Omega^1_{X,p}. $$

Now  we define a map $\psi^*:\mathcal{O}_{X,p}\to \mathcal{O}_{\C^n,o}$ (by the same notation) as  $\psi^*(f)(z_1,...,z_n):=f(y_1,...,y_n)$ for $f\in \mathcal{O}_{X,p}$.
This map $\psi^*:\mathcal{O}_{X,p}\to \mathcal{O}_{\C^n,o}$ is a local ring homomorphism since it is a $\C$-linear map and
$\psi^*(f_1f_2)=\psi^*(f_1)\psi^*(f_2)$. To see the last formula, we get from the definition of $\psi^*$ that
$\psi^*(f_1\cdot f_2)(z_1,...,z_n)= (f_1\cdot f_2)(y_1,...,y_n)=f_1(y_1,...,y_n)\cdot f_2(y_1,...,y_n)=\psi^*(f_1)(z_1,...,z_n)\cdot\psi^*(f_2)(z_1,...,z_n)$.

\begin{lemma}
This map $\psi^*:\mathcal{O}_{X,p}\to \mathcal{O}_{\C^n,o}$ defined by $\psi^*(f)(z_1,...,z_n)=f(y_1,...,y_n)$ for $f\in \mathcal{O}_{X,p}$ is injective.
\end{lemma}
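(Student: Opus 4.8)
The plan is to show that $\psi^{*}$ is \emph{order preserving} for the $m$-adic filtrations: a nonzero $f$ of order $N$ (i.e.\ $f\in m_p^{N}\setminus m_p^{N+1}$) should go to an element of order exactly $N$, in particular to a nonzero element. Everything reduces to the fact, already extracted from the Harvey--Lawson construction above, that the $y_i=T(\omega_i)$ form a regular system of parameters at $p$.

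First I would check that $\psi^{*}$ is a \emph{local} homomorphism of Noetherian local $\C$-algebras. Applying the defining formula $\psi^{*}(f)(z)=f(y_1,\dots,y_n)$ to $f=y_i$ gives $\psi^{*}(y_i)=z_i$, so $\psi^{*}(m_p)\subseteq m_o$, and the induced $\C$-linear map $\overline{\psi^{*}}\colon m_p/m_p^{2}\to m_o/m_o^{2}$ sends the basis $\{dy_1,\dots,dy_n\}$ of $T^{*}_pX\cong m_p/m_p^{2}$ (a basis by the discussion preceding the lemma) to the basis $\{dz_1,\dots,dz_n\}$, hence is an isomorphism. Since $X$ and $\C^{n}$ are smooth of dimension $n$, $\mathcal{O}_{X,p}$ and $\mathcal{O}_{\C^{n},o}$ are regular local rings of dimension $n$, so their associated graded rings are polynomial rings $\mathrm{gr}_{m_p}\mathcal{O}_{X,p}\cong\C[\overline{y}_1,\dots,\overline{y}_n]$ and $\mathrm{gr}_{m_o}\mathcal{O}_{\C^{n},o}\cong\C[\overline{z}_1,\dots,\overline{z}_n]$, generated in degree one. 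The graded ring map induced by $\psi^{*}$ agrees with $\overline{\psi^{*}}$ in degree one, sending the generators $\overline{y}_i\mapsto\overline{z}_i$, and therefore is an isomorphism of graded rings (with obvious inverse). Equivalently, passing to completions, $\widehat{\mathcal{O}}_{X,p}\cong\C[[y_1,\dots,y_n]]$ (the $y_i$ being a regular system of parameters) and $\widehat{\psi^{*}}$ is the coordinate-relabelling isomorphism $y_i\mapsto z_i$ onto $\widehat{\mathcal{O}}_{\C^{n},o}\cong\C[[z_1,\dots,z_n]]$.

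Then I would finish as follows. Let $f\in\mathcal{O}_{X,p}$ with $f\neq0$. By Krull's intersection theorem $\bigcap_{N}m_p^{N}=0$, so there is an $N\geq0$ with $f\in m_p^{N}\setminus m_p^{N+1}$; let $\mathrm{in}(f)\neq0$ denote its image in $m_p^{N}/m_p^{N+1}$. Then $\psi^{*}(f)\in m_o^{N}$, and the class of $\psi^{*}(f)$ in $m_o^{N}/m_o^{N+1}$ is the image of $\mathrm{in}(f)$ under the graded isomorphism above, hence nonzero; in particular $\psi^{*}(f)\neq0$. Thus $\ker\psi^{*}=0$. (Alternatively: $\mathcal{O}_{X,p}\hookrightarrow\widehat{\mathcal{O}}_{X,p}\xrightarrow{\ \widehat{\psi^{*}}\ }\widehat{\mathcal{O}}_{\C^{n},o}$ is injective and factors through $\psi^{*}$.)

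The only step that genuinely carries weight is the assertion that $\{dy_1,\dots,dy_n\}$ is a basis of $T^{*}_pX$, equivalently that the Harvey--Lawson outputs $y_i=T(\omega_i)$ form a regular system of parameters at $p$; this is what makes $\overline{\psi^{*}}$ an isomorphism, and it rests on the relations $dy_i-\omega_i\in\ker T$ and $T\circ d=\mathrm{id}$ on $m_p$ established above. After that, the argument is formal commutative algebra. I would also add a remark that even though $\psi^{*}$ a priori takes values only in the analytic local ring, the argument yields injectivity already into the completion, which is all that is needed here.
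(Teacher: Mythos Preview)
Your proof is correct but takes a substantially different route from the paper's. The paper dispatches the lemma in two lines directly from the definition: if $\psi^{*}(f)=0$, then $f(y_1,\dots,y_n)=0$ for all $(y_1,\dots,y_n)$ in a neighborhood of $p$ (since $\psi^{*}$ is literally the relabelling $y_i\leftrightarrow z_i$ once the $y_i$ are taken as local coordinates), so $f=0$ as a germ. No filtrations, no Krull, no graded rings.

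Your argument, by contrast, shows that $\psi^{*}$ induces an isomorphism on associated graded rings (equivalently on completions) and deduces injectivity via Krull's intersection theorem. This is heavier than needed for the lemma as stated, but it buys something real: the graded isomorphism you establish is essentially the content the paper later proves separately (by an inductive diagram chase on $m_p^{k}/m_p^{k+1}$) in order to upgrade injectivity to an isomorphism $\psi^{*}\colon\mathcal{O}_{X,p}\xrightarrow{\sim}\mathcal{O}_{\C^n,o}$. So your approach front-loads that work and would let you skip directly to the conclusion of the main theorem, whereas the paper's short proof here defers the graded analysis to subsequent lemmas. Both are valid; yours is more self-contained for the overall argument, the paper's is more economical for this single statement.
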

\begin{proof}
Let $f\in \mathcal{O}_{X,p}$ such that $\psi^*(f)=0$. Then  $0=\psi^*(f)(z_1,...,z_n)=f(y_1,...,y_n)$ for $(y_1,...,y_n)$ in the neighborhood of $p$. This implies
that $f$ is zero in $\mathcal{O}_{X,p}$.
\end{proof}

Now we define map $T_o:\Omega^1_{\C^n,o}\to m_o$, $T_o(dg):=g$ for $g\in \mathcal{O}_{\C^n,o}$. Then $T_o(dz_i)=z_i$ for $1\leq i\leq n$.

\begin{lemma}\label{lemma2.6}
There exists a commutative diagram
\begin{equation}\label{eq2.7}
\xymatrix{
& {\mathcal O}_{X,p}\ar[r]^d\ar[d]^{\psi^*} & {\Omega}^1_{X,p}\ar[d]^{\psi^*}\ar[r]^{T}\ar[d]& {\mathcal O}_{X,p}\ar[d]^{\psi^*}\\
&{\mathcal O}_{\C^n,o}\ar[r]^d &  {\Omega}^1_{\C^n,o}\ar[r]^{T_o}& {\mathcal O}_{\C^n,o},
}
\end{equation}
Moreover, $\psi^*(m_p)\subset m_o$.
\end{lemma}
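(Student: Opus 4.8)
The plan is to derive the whole lemma from the single fact, established just above, that $dy_1,\dots,dy_n$ is a $\C$-basis of $T^*_pX\cong m_p/m_p^2$. First I would note that this forces the holomorphic map $(y_1,\dots,y_n)\colon U_p\to\C^n$ to have invertible differential at $p$, hence, by the holomorphic inverse function theorem, to be a biholomorphism of a neighborhood of $p$ onto a neighborhood of $o$ sending $p$ to $o$; its inverse is the map $\psi\colon U_o\to U_p$. With this reading, $\psi^*$ on $\mathcal O_{X,p}$ and on $\Omega^1_{X,p}$ is simply the pullback of germs of functions, resp.\ of $1$-forms, along $\psi$, so it is an isomorphism of local $\C$-algebras. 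In particular, for $f\in m_p$ we get $\psi^*(f)(o)=f(\psi(o))=f(p)=0$, i.e.\ $\psi^*(m_p)\subseteq m_o$ (indeed $\psi^*(m_p)=m_o$), which settles the last assertion of the lemma.

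Next I would treat the two squares separately. The left square is exactly the naturality of the exterior derivative under a smooth map, $\psi^*\circ d=d\circ\psi^*$ on $\mathcal O_{X,p}$; once one unwinds the definition of $\psi^*$ on $\Omega^1_{X,p}$ this is just the chain rule. For the right square I would first record that, after normalizing the additive constant, $T\colon\Omega^1_{X,p}\to m_p$ is the two-sided inverse of $d\colon m_p\to\Omega^1_{X,p}$: surjectivity of the latter is the defining description of $\Omega^1_{X,p}$, injectivity is clear (a germ with vanishing differential is locally constant, hence lies in $\C$ and, being in $m_p$, is $0$), and $T\circ d=\mathrm{id}_{m_p}$ is \eqref{eqn2.5} (while \eqref{eqn2.4} gives $T(df)=f-f(p)$ in general); the identical remarks apply to $T_o$ and $d\colon m_o\to\Omega^1_{\C^n,o}$. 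Then, writing an arbitrary $\omega\in\Omega^1_{X,p}$ as $\omega=df$ with $f\in m_p$ and invoking the left square,
\begin{equation*}
T_o(\psi^*\omega)=T_o\bigl(d(\psi^*f)\bigr)=\psi^*f=\psi^*\bigl(T(df)\bigr)=\psi^*(T\omega),
\end{equation*}
where the second equality uses $T_o\circ d=\mathrm{id}_{m_o}$, legitimate because $\psi^*f\in m_o$ by the first step. This closes the right square, and \eqref{eq2.7} commutes.

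The one point that needs genuine care --- the main obstacle, such as it is --- is the bookkeeping around well-definedness: one must check that $\psi^*$ on $\Omega^1_{X,p}$ does not depend on the chosen potential $f$ with $\omega=df$, and that $T$ and $T_o$ are unambiguous, since ``$T(df)=f$'' makes sense only after one fixes the representative $f$ with $f(p)=0$. Both issues collapse to the same elementary fact, injectivity of $d$ on $m_p$, i.e.\ that a holomorphic germ killed by $d$ is constant. Granting that, every arrow in \eqref{eq2.7} is canonical and the commutativity is automatic; there is no serious analytic input beyond the Harvey--Lawson formula already used to produce $T$ and the coordinates $y_i$.
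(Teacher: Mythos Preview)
Your proof is correct and follows the same route as the paper's: the left square is handled by naturality of $d$, and the right square by writing an arbitrary $\omega=df$ with $f\in m_p$ and computing $T_o(\psi^*\omega)=T_o(d\psi^*f)=\psi^*f=\psi^*(T\omega)$, exactly the chain the paper uses. You are more careful than the paper about the well-definedness of $T$ and $T_o$ and about why $\psi^*f\in m_o$ is needed for the middle equality; your added observation, via the holomorphic inverse function theorem, that $\psi$ is already a biholomorphism (hence $\psi^*$ an isomorphism of analytic local rings) is not invoked in the paper's proof of this lemma and would in fact make the subsequent Lemmas~\ref{lemma8} and~\ref{lemma9} superfluous in the analytic category.
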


\begin{proof}
The first square in Equation \ref{eq2.7} is commutative since $d\circ \psi^*(f)= \psi^* \circ df$ by their definitions,
while  the second square in Equation \ref{eq2.7} is commutative since $T\circ d=Id-P$ and the definition of $T_o$.
To see this,  we take $\alpha=df$ where $f(p)=0$, then
$T_o(\alpha)=f$. Hence $\psi^*T(\alpha)=\psi^*(f)=T_o(d(\psi^*(f)))=T_o(\psi^*(df))=T_o(\psi^*(\alpha))$
 since the map $\psi^*: \mathcal{O}_{X,p}\to \mathcal{O}_{\C^n,o}$ and $\psi^*:\Omega^1_{X,p} \to \Omega^1_{\C^n,0}$ are natural maps.
\end{proof}

\begin{remark}
The key point in Lemma \ref{lemma2.6} is that there exists map $T_o:\Omega^1_{\C^n,o}\to m_o$ such that $T_o\circ d=I$ on $m_o$.
\end{remark}

Note that there are   natural $\C$-linear isomorphisms
\begin{equation}\label{eqn2.9}
\Omega^1_{X,p}\cong \bigoplus_{i=0}^{\infty} \frac{m_p^i}{m_p^{i+1}}\otimes \frac{m_p}{m_p^2}
\end{equation}
and
\begin{equation}\label{eqn2.10}
{\mathcal O}_{X,p}\cong \bigoplus_{i=0}^{\infty} \frac{m_p^{i}}{m_p^{i+1}}.
\end{equation}
Equation \eqref{eqn2.9} follows from Equation \eqref{eqn2.10}, where the isomorphism of  Equation \eqref{eqn2.10} follows from Taylor
expansion of $f$ in local coordinates $y_1,...,y_n$ at $p$,$f= f_0+f_1+...$, where $f_i\in m_p^i$ is the sum of all the terms of
homogeneous degree $i$.

Under these isomorphisms, we see the  image of $\frac{m_p^i}{m_p^{i+1}}\otimes \frac{m_p}{m_p^2}$ under the map $$\Omega^1_{X,p} \stackrel{T}{\to} {\mathcal O}_{X,p}$$
is in $\frac{m_p^{i+1}}{m_p^{i+2}}$. Moreover, $$T(\frac{m_p^i}{m_p^{i+1}}\otimes \frac{m_p}{m_p^2})= \frac{m_p^{i+1}}{m_p^{i+2}}$$  since $m_p^i \cdot m_p=m_p^{i+1}$.

Now from Equation \eqref{eq2.7}, we have the following exact sequence of commutative diagram

\begin{equation}\label{eqn7}
\xymatrix{
0\ar[r]&\ker (T)\ar[r]\ar[d]^{\psi^*} & {\Omega}^1_{X,p}\ar[d]^{\psi^*}\ar[r]^{T}\ar[d]&m_{p}\ar[d]^{\psi^*}\ar[r]&0\\
0\ar[r]&\ker (T_o)\ar[r] & {\Omega}^1_{\C^n,o}\ar[r]^{T_0}& m_{o}\ar[r]&0.
}
\end{equation}

\begin{lemma}\label{lemma8}
There exists

The pullback map ${\psi^*}:{\Omega}^1_{X,p} \to   {\Omega}^1_{\C^n,o}$ is surjective.
\end{lemma}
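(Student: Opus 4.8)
The plan is to deduce surjectivity of $\psi^*$ from the $m_p$-adic filtration, using that the Harvey--Lawson operator $T$ maps $\frac{m_p^i}{m_p^{i+1}}\otimes\frac{m_p}{m_p^2}$ onto $\frac{m_p^{i+1}}{m_p^{i+2}}$ (recorded just before \eqref{eqn7}) together with the fact that, by construction, $\psi^*$ already restricts to the chosen isomorphism $T^*_pX = m_p/m_p^2 \xrightarrow{\sim} \C^n = m_o/m_o^2$. First I would note the compatibility of $\psi^*$ with the gradings \eqref{eqn2.9} and \eqref{eqn2.10}: in the chosen coordinates $\psi^*$ sends a germ $f = f_0 + f_1 + f_2 + \cdots$ (its Taylor expansion in $y_1,\dots,y_n$, with $f_i\in m_p^i$ homogeneous of degree $i$) to $\sum_i f_i(z_1,\dots,z_n)$, hence carries $m_p^i/m_p^{i+1}$ into $m_o^i/m_o^{i+1}$ and, correspondingly, $\frac{m_p^i}{m_p^{i+1}}\otimes\frac{m_p}{m_p^2}$ into $\frac{m_o^i}{m_o^{i+1}}\otimes\frac{m_o}{m_o^2}$; since moreover $\psi^*(dy_i)=dz_i$, it is a homomorphism over the ring homomorphism $\psi^*\colon\mathcal{O}_{X,p}\to\mathcal{O}_{\C^n,o}$, acting on the $i$-th summand as the tensor product of its restrictions to $m_p^i/m_p^{i+1}$ and to $m_p/m_p^2$. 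It therefore suffices to prove surjectivity of $\psi^*$ on each graded summand.

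Second I would run the evident induction on $i$. For $i=0$ the map $m_p/m_p^2\to m_o/m_o^2$ is the isomorphism we started from. Assuming $\psi^*$ is onto $\frac{m_o^i}{m_o^{i+1}}\otimes\frac{m_o}{m_o^2}$, the commutativity of the right-hand square of \eqref{eq2.7} (equivalently of \eqref{eqn7}) and the fact that $T_o$ maps $\frac{m_o^i}{m_o^{i+1}}\otimes\frac{m_o}{m_o^2}$ onto $\frac{m_o^{i+1}}{m_o^{i+2}}$ give
\begin{align*}
\tfrac{m_o^{i+1}}{m_o^{i+2}}
&= T_o\!\left(\tfrac{m_o^{i}}{m_o^{i+1}}\otimes\tfrac{m_o}{m_o^2}\right)
= T_o\,\psi^*\!\left(\tfrac{m_p^{i}}{m_p^{i+1}}\otimes\tfrac{m_p}{m_p^2}\right) \\
&= \psi^*\,T\!\left(\tfrac{m_p^{i}}{m_p^{i+1}}\otimes\tfrac{m_p}{m_p^2}\right)
= \psi^*\!\left(\tfrac{m_p^{i+1}}{m_p^{i+2}}\right),
\end{align*}
so $\psi^*$ is onto $m_o^{i+1}/m_o^{i+2}$; tensoring this surjection with the degree-zero isomorphism $m_p/m_p^2\xrightarrow{\sim}m_o/m_o^2$ yields surjectivity on the $(i+1)$-st summand and closes the induction. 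In particular $\psi^*\colon\mathcal{O}_{X,p}\to\mathcal{O}_{\C^n,o}$ is likewise surjective on every graded summand of \eqref{eqn2.10}.

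The step I expect to be the genuine obstacle is the passage from ``surjective on every graded summand'' to ``surjective on $\Omega^1_{X,p}$''. The decompositions \eqref{eqn2.9}--\eqref{eqn2.10} are honest direct sums only at the level of the associated graded ring $\mathrm{gr}_{m_p}\mathcal{O}_{X,p}\cong\C[y_1,\dots,y_n]$: a general element of $\mathcal{O}_{\C^n,o}$ or $\Omega^1_{\C^n,o}$ has an infinite Taylor expansion, so one must verify that the preimages produced degree by degree actually sum to an element of the \emph{non-complete} local ring $\mathcal{O}_{X,p}$, not merely of its completion $\C[[y_1,\dots,y_n]]$ --- equivalently, that substituting $z_i$ for $y_i$ carries the algebraic germs $\mathcal{O}_{X,p}$ \emph{onto}, and not just into, the germs $\mathcal{O}_{\C^n,o}$. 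Since an injective local homomorphism of regular local $\C$-algebras of the same dimension that induces an isomorphism on completions need not be surjective (for instance $z\mapsto w+w^2$ on $\C[z]_{(z)}\to\C[w]_{(w)}$), this is precisely where the global input --- that $p$ is the \emph{unique} fixed point of the $\C$-action, and the algebraicity of the projective graph $\overline{\mathcal{T}}$ --- has to be brought to bear, and I would concentrate the remaining effort there.
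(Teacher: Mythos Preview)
Your inductive argument on the graded pieces---base case from the cotangent isomorphism $m_p/m_p^2\xrightarrow{\sim}m_o/m_o^2$, inductive step via the commutative square involving $T$ and $T_o$, then tensoring back up---is exactly the paper's proof. The paper stops there: it treats the decompositions \eqref{eqn2.9}--\eqref{eqn2.10} as literal $\C$-linear isomorphisms (justified only by ``Taylor expansion of $f$ in local coordinates'') and concludes surjectivity of $\psi^*$ directly from surjectivity on each summand, without further comment.

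The concern you raise in your final paragraph---that \eqref{eqn2.9}--\eqref{eqn2.10} really only identify the associated graded, and that surjectivity on $\mathrm{gr}$ does not in general force surjectivity on the non-complete local ring (your example $z\mapsto w+w^2$ on $\C[z]_{(z)}\to\C[w]_{(w)}$ is apt)---is not addressed in the paper. So your write-up already contains everything the paper's proof contains, and you have correctly located the point at which additional input (presumably the algebraicity of $\overline{\mathcal{T}}$ and the uniqueness of the fixed point) would have to enter; the paper itself does not supply that step.
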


\begin{proof}
Note that  $\Omega^1_{X,p}\cong \bigoplus_{i=0}^{\infty} \frac{m_p^i}{m_p^{i+1}}\otimes \frac{m_p}{m_p^2}$
and  $\Omega^1_{\C^n,o}\cong \bigoplus_{i=0}^{\infty} \frac{m_o^i}{m_o^{i+1}}\otimes \frac{m_o}{m_o^2}$.

Since $\dim X=n$, $\psi^*$ induces an isomorphism $\psi^*: T_p^*X\cong T_o^*(\C^n)$ of $\C$-vector spaces, i.e., $\psi^*:\frac{m_p}{m_p^2}\cong  \frac{m_o}{m_o^2}$.
In other words, $\psi^*:\frac{m_p^i}{m_p^{i+1}}\otimes \frac{m_p}{m_p^2} \to  \frac{m_o^i}{m_o^{i+1}}\otimes \frac{m_o}{m_o^2}$ is an isomorphism for $i=0$.

Now we use inductions on $i$.
Suppose that $\psi^*:  \frac{m_p^i}{m_p^{i+1}}\otimes \frac{m_p}{m_p^2}\to \frac{m_o^i}{m_o^{i+1}}\otimes \frac{m_o}{m_o^2} $ is surjective for $i=k$.

Consider the following commutative diagram
$$\xymatrix{\frac{m_p^k}{m_p^{k+1}}\otimes \frac{m_p}{m_p^2}\ar[r]^-{T_p}\ar[d]^{\psi^*}& \frac{m_p^{k+1}}{m_p^{k+2}}\ar[d]^{\psi^*}\ar[r]&0\\
\frac{m_0^k}{m_0^{k+1}}\otimes\frac{m_o}{m_o^2}\ar[r]^-{T_o}& \frac{m_o^{k+1}}{m_o^{k+2}}\ar[r]&0,
}
$$
we get from Equation \eqref{eqn7} both $T$ and $T_o$ are surjective. By the inductive hypothesis, the first column $\psi^*$ is isomorphism. Hence we
know $\psi^*$ on the second column is surjective. Therefore,
$$\psi^*:\frac{m_p^{k+1}}{m_p^{k+2}}\otimes \frac{m_p}{m_p^2}\to\frac{m_o^{k+1}}{m_o^{k+2}} \otimes \frac{m_o}{m_o^2}$$ is also surjective.
This completes the induction and hence the proof of the lemma.
\end{proof}

\begin{lemma}\label{lemma9}
The pullback map ${\psi^*}:m_p\to m_o$ is an isomorphism.
\end{lemma}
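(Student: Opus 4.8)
The plan is to establish that $\psi^*\colon m_p\to m_o$ is bijective. Injectivity is already in hand: since $\psi^*(m_p)\subset m_o$ by Lemma \ref{lemma2.6}, the map in question is the restriction of the ring homomorphism $\psi^*\colon\mathcal{O}_{X,p}\to\mathcal{O}_{\C^n,o}$ to the ideal $m_p$, and that homomorphism was shown above to be injective; a restriction of an injective map is injective. So the real content of the lemma is surjectivity.

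For surjectivity I would run a short diagram chase on the commutative diagram \eqref{eqn7} with exact rows. Let $g\in m_o$ be arbitrary. Then $dg\in\Omega^1_{\C^n,o}$ and, by the very definition of $T_o$, we have $T_o(dg)=g$. By Lemma \ref{lemma8} the pullback $\psi^*\colon\Omega^1_{X,p}\to\Omega^1_{\C^n,o}$ is surjective, so choose $\alpha\in\Omega^1_{X,p}$ with $\psi^*(\alpha)=dg$, and set $f:=T(\alpha)$. From the top row of \eqref{eqn7}, $T$ maps $\Omega^1_{X,p}$ onto $m_p$, so $f\in m_p$. Commutativity of the right-hand square of \eqref{eqn7} (that is, of the second square of \eqref{eq2.7}, verified in Lemma \ref{lemma2.6}) then gives
\[
\psi^*(f)=\psi^*\bigl(T(\alpha)\bigr)=T_o\bigl(\psi^*(\alpha)\bigr)=T_o(dg)=g .
\]
Hence $g$ lies in the image of $\psi^*\colon m_p\to m_o$, and surjectivity follows.

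Combining the two halves, $\psi^*\colon m_p\to m_o$ is a $\C$-linear bijection compatible with the $\mathcal{O}_{X,p}$-module structures, hence an isomorphism. I do not anticipate a genuine obstacle here: the essential work — proving that $\psi^*$ is surjective on $1$-forms by filtering $\Omega^1_{X,p}$ and $\Omega^1_{\C^n,o}$ by powers of the maximal ideal and using surjectivity of $T$ and $T_o$ on the associated graded pieces — was already carried out in Lemma \ref{lemma8}. The only point deserving a moment's care is to confirm that the chosen lift $\alpha$ produces an element $T(\alpha)$ that genuinely lies in $m_p$ rather than merely in $\mathcal{O}_{X,p}$, which is exactly what exactness of the top row of \eqref{eqn7} provides (equivalently, $T(df)=f-f(p)$ always vanishes at $p$).
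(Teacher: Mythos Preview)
Your proof is correct and follows essentially the same approach as the paper: both argue injectivity from the already-established injectivity of $\psi^*$ on local rings, and both deduce surjectivity via the commutative diagram \eqref{eqn7}, using surjectivity of $\psi^*$ on $1$-forms (Lemma~\ref{lemma8}) together with surjectivity of $T_o$. The paper states the diagram chase more tersely, while you have written it out explicitly, but the argument is the same.
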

\begin{proof}
Since $\psi^*:m_o\to m_p$ is a prior injective as we explained above, we only need to show it is surjective.
Note that in  Equation \eqref{eqn7}, both
$T$ and $T_o$ are surjective. Moreover, $\psi^*$ on the middle column is surjective by Lemma \ref{lemma8}.
Therefore, $\psi^*$ on the third column is surjective. This completes the proof of the lemma.
\end{proof}

\begin{remark}
Harvey and Lawson applied to their techniques to the flow given by a $\C^*$-action on a compact K\"{a}hler manifold to obtain a complex graph $\mathcal{T}$.
Moreover, they obtained that $\mathcal{T}$ is of finite volume, $\overline{\mathcal{T}}$ are of an analytic subvariety (see \cite{Sommese}).
\end{remark}

\begin{theorem}\label{Thm}
If $X$ is a nonsingular irreducible projective variety admitting a $\C$-action with nonempty isolated fixed point, then $X$
is rational.
\end{theorem}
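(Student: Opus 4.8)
The plan is to feed the lemmas proved in this section into a single isomorphism of local rings and then pass to fraction fields. First I would reduce to the situation fixed at the start of the section, namely a $\C$-action with a \emph{single} fixed point. It is classical that the fixed locus of an additive group action on a complete variety is connected; hence if $X^{\C}$ is nonempty and zero-dimensional it consists of one reduced point $p$, and we are exactly in the case $X^{\C}=\{p\}$ treated above. (Alternatively, this reduction is already subsumed in the equivalence of Conjectures \ref{Conj1} and \ref{Conj2} recalled in the introduction.)

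Next I would promote the partial statements about $\psi^{*}$ into a full isomorphism $\psi^{*}\colon\mathcal{O}_{X,p}\stackrel{\cong}{\to}\mathcal{O}_{\C^{n},o}$ of local $\C$-algebras. Recall that $\psi^{*}$ is an injective $\C$-algebra homomorphism with $\psi^{*}(m_{p})\subseteq m_{o}$, and that by Lemma \ref{lemma9} it restricts to an isomorphism $m_{p}\stackrel{\cong}{\to}m_{o}$. Since $\psi^{*}$ is the identity on the common residue field $\C$, and $\mathcal{O}_{X,p}=\C\oplus m_{p}$, $\mathcal{O}_{\C^{n},o}=\C\oplus m_{o}$ as $\C$-vector spaces, surjectivity of $\psi^{*}$ on $m_{p}$ upgrades to surjectivity on all of $\mathcal{O}_{X,p}$; together with injectivity this yields the isomorphism. (Equivalently, apply the five lemma to the split short exact sequences $0\to m_{p}\to\mathcal{O}_{X,p}\to\C\to0$ and $0\to m_{o}\to\mathcal{O}_{\C^{n},o}\to\C\to0$.)

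Then I would read off rationality. Because $X$ is irreducible, the fraction field of $\mathcal{O}_{X,p}$ is the function field $\C(X)$; on the other side $\mathcal{O}_{\C^{n},o}=\C[z_{1},\dots,z_{n}]_{(z_{1},\dots,z_{n})}$ has fraction field $\C(z_{1},\dots,z_{n})$. Passing to fraction fields in the isomorphism above gives $\C(X)\cong\C(z_{1},\dots,z_{n})$, i.e.\ $\C(X)$ is purely transcendental over $\C$ of transcendence degree $n=\dim X$, so $X$ is rational. This is Conjecture \ref{Conj2}, and via the equivalence recalled in the introduction it gives Conjecture \ref{Conj1} as well.

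The step I expect to be the genuine obstacle is not any of the formal manipulations above but the insistence that $\mathcal{O}_{X,p}$ throughout denotes the ring of \emph{regular} (algebraic) functions near $p$, and not its $m_{p}$-adic completion nor its ring of holomorphic germs: for a completion or an analytic germ the conclusion $\mathcal{O}_{X,p}\cong\mathcal{O}_{\C^{n},o}$ would only re-express the smoothness of $X$ at $p$ and would carry no birational content. What makes the argument genuine is that the Harvey--Lawson homotopy operator $T$ is induced by the \emph{algebraic} correspondence $\overline{\mathcal{T}}\subset\P^{1}\times X\times X$, so that it sends algebraic forms to algebraic functions; concretely, the $y_{i}=T(\omega_{i})$ are honest rational functions regular at $p$ with linearly independent differentials, and $\psi^{*}$ is realized by the rational map $(y_{1},\dots,y_{n})\colon X\dashrightarrow\C^{n}$, which the above shows to be birational. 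Checking that $T$ respects the algebraic category in this precise sense --- equivalently that $\psi^{*}$ surjects onto the \emph{algebraic} local ring $\C[z_{1},\dots,z_{n}]_{(z_{1},\dots,z_{n})}$ rather than merely onto a convergent or formal power series ring --- is the crux on which everything rests.
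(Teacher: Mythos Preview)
Your proposal is correct and follows essentially the same route as the paper: combine the injectivity of $\psi^{*}$ with Lemma~\ref{lemma9} (surjectivity on $m_{p}$) and the fact that $\psi^{*}$ is the identity on constants to obtain an isomorphism $\mathcal{O}_{X,p}\cong\mathcal{O}_{\C^{n},o}$, then pass to fraction fields to conclude $K(X)\cong K(\C^{n})$ and hence rationality. Your explicit reduction from ``nonempty isolated fixed points'' to a single fixed point, and your closing caveat that the whole argument hinges on $T$ preserving the \emph{algebraic} (rather than analytic or formal) local ring, are additions not spelled out in the paper's proof but do not change the underlying strategy.
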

\begin{proof}
Since $\psi^*:\mathcal{O}_{\C^n,p}\to \mathcal{O}_{X,o}$ is an injective $\C$-algebraic homomorphism by Lemma \ref{lemma2.6}.
By Lemma \ref{lemma9}, $\psi^*: m_p\to m_o$ is  surjective.
Since $\psi^*$ maps constants in $\mathcal{O}_{X,p}$  exactly onto constants in  $\mathcal{O}_{\C^n,o}$ , we obtain that
$\psi^*: \mathcal{O}_{X,p}\to \mathcal{O}_{\C^n,o}$ is also surjective.  Hence $\psi^*:\mathcal{O}_{X,p} \to\mathcal{O}_{\C^n,o}$ is
an isomorphism.
Since the rational function field $K(X)$ (resp. $K(\P^n)$)
of $X$ (resp. $\C^n$) is the fractional field of $\mathcal{O}_{X,p}$ (resp. $\mathcal{O}_{\C^n,o}$),  $\psi^*:K(X)\to K(\C^n)$
is an isomorphism as a field. This implies that $X$ is birational to $\C^n$ (and hence $\P^n$) (see \cite{Hartshorne}) and hence $X$ is rational.
This completes the proof of the theorem.
\end{proof}

\begin{remark}
Since the condition of regularity of $X$ is a key part in the proof of Theorem \ref{Thm} and in the processing to construct equivariant projections, it obviously does not work for varieties
with singularity. Moreover,  cones over nonsingular projective varieties are  counterexamples which prevent  generalizing Theorem \ref{Thm}
to the singular case.
\end{remark}

\section{Uniform Upper bound of Euler numbers and Betti numbers}

In this section, let $X$ be a nonsingular complex projective variety of dimension $n$ admitting a $\C$-action with exactly one fixed point.
Recall that
 \begin{equation}
 I-P=d\circ T+T\circ d:\mathcal{E}^k(X)\to \mathcal{D}'^k(X)
 \end{equation}

For $\alpha \in \mathcal{E}^k(X)$ such that $d \alpha=0$, we have
$$
\alpha-P\alpha= d T\alpha.
$$

Since $P\alpha=\lim_{t\infty}\phi_t^*(\alpha) $, the value of $P\alpha$ at $x\in X$
is given by
\begin{equation}\label{eqn3.2}
(P\alpha)(x)=\lim_{t\to \infty} \phi_t^*(\alpha)(x)=\lim_{t\to \infty}\alpha\circ \phi_t(x)=\alpha(\lim_{t\to \infty}\phi_t(x))=\alpha(p).
\end{equation}

That is to say, $P\alpha$ is a constant value form on $X$.  Note that
the type of $\alpha$ is preserved under the projector operator $P$ from its definition, we get $P\alpha$ is a constant $(p,q)$-form
is $\alpha$ is a closed form of $(p,q)$-type. Recall that Carrell and Lieberman showed that $H^{i,j}(X)=0$ if $i\neq j$ (see \cite{Carrell-Lieberman}), there is no constant $(i,j)$-form on $X$ if $i\neq j$. For $i=j$, a constant $(i,i)$-form is the
$\C$-linear combination of  $dz_{n_1}\wedge...\wedge dz_{n_i}\wedge d\bar{z}_{m_1}\wedge...\wedge d\bar{z}_{m_i}$. The number
of such form is $ (^n_i)\cdot (^n_i)$.  Hence we get the following upper bound of  Hodge numbers $h^{i,j}(X)=\dim H^{i,j}(X)$, Betti numbers $b^i(X)$ and the Euler number $\chi(X)$ of such a
nonsingular complex projective variety $X$.

\begin{theorem}\label{Th3.2}
  Let $X$ be a nonsingular complex projective variety of $\dim X=n$ admitting a $\C$-action with exactly one fixed point.
  Then $b^{2i}(X)= h^{i,i}(X)\leq (^n_i)$ and the Euler number $\chi(X)$ of $X$ is upper bounded by $2^n$.
\end{theorem}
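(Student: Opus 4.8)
The plan is to run the Harvey--Lawson formalism from Section~2 one more time, now on $k$-forms rather than on functions. Starting from the operator identity $I-P=d\circ T+T\circ d$ on $\mathcal{E}^{k}(X)$ and feeding in a closed form $\alpha$, one gets $\alpha-P\alpha=dT\alpha$, so $[\alpha]=[P\alpha]$ in $H^{k}(X;\C)$; hence the induced map $[\alpha]\mapsto[P\alpha]$ is the identity on $H^{*}(X;\C)$, so every de Rham class is represented by a form in the image of $P$. The next point is that this image is small modulo exact forms: since $P\alpha=\lim_{t\to\infty}\phi_{t}^{*}\alpha$ and the action is a group, $\phi_{s}^{*}(P\alpha)=\lim_{t\to\infty}\phi_{t+s}^{*}\alpha=P\alpha$, so $P\alpha$ is $\C$-invariant and (as $d(P\alpha)=\lim_{t\to\infty}\phi_{t}^{*}(d\alpha)=0$) closed, while by \eqref{eqn3.2} its value is controlled by $\alpha(p)$. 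Using the local model of Section~2 (the coordinates $z_{1},\dots,z_{n}$ at $p$), one argues that $P\alpha$ is represented, near $p$ and hence in cohomology, by a constant-coefficient form, so that $H^{*}(X;\C)$ is spanned by classes of $\C$-invariant constant-coefficient forms.

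Next I would bring in the bigrading. Since $\phi_{t}$ is holomorphic, $\phi_{t}^{*}$, and hence its limit $P$, preserves the $(p,q)$-type; combined with the Hodge decomposition this shows that each $H^{p,q}(X)$ is spanned by classes of constant $(p,q)$-forms. Invoking the Carrell--Lieberman vanishing $H^{p,q}(X)=0$ for $p\neq q$ (legitimate here, since a projective $X$ is compact K\"{a}hler), all odd Betti numbers vanish and $b^{2i}(X)=h^{i,i}(X)$, with $H^{i,i}(X)$ spanned by classes of constant $(i,i)$-forms $dz_{j_{1}}\wedge\cdots\wedge dz_{j_{i}}\wedge d\bar z_{m_{1}}\wedge\cdots\wedge d\bar z_{m_{i}}$.

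It remains to get the sharp count $h^{i,i}(X)\le\binom{n}{i}$. A crude tally of the monomials above gives only $\binom{n}{i}^{2}$; the improvement must use that the representatives $P\alpha$ are $\C$-invariant, hence annihilated by the Lie derivative $\mathcal{L}_{V}$, whose linear part at $p$ is nilpotent because the additive group acts unipotently on $T_{p}X$. One therefore works inside the kernel of this nilpotent operator on $\Lambda^{i,i}T_{p}^{*}X$ and argues that it supplies at most $\binom{n}{i}$ independent cohomology classes --- for instance by reducing to the span of the ``coordinate'' forms $\bigwedge_{j\in J}(dz_{j}\wedge d\bar z_{j})$ with $|J|=i$. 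Granting this, $\chi(X)=\sum_{i=0}^{n}b^{2i}(X)\le\sum_{i=0}^{n}\binom{n}{i}=2^{n}$, and the case $i=1$ gives $b^{2}(X)=h^{1,1}(X)\le n=\dim X$, which recovers and strengthens the Akyildiz--Carrell estimate.

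The main obstacle is exactly this last refinement --- genuinely exploiting the $\C$-invariance of $P\alpha$ and the unipotence of the action at $p$ to replace $\binom{n}{i}^{2}$ by $\binom{n}{i}$. Two further technical points need attention: making ``$P\alpha$ is a constant-coefficient form'' rigorous (a priori $P\alpha$ is only a current, so one must either control its singular locus or argue purely at the level of its cohomology class via the local model of Section~2), and verifying that the Carrell--Lieberman theorem applies under the present hypothesis, where only $X^{\C}=\{p\}$ is assumed and the zero scheme of $V$ may well be non-reduced.
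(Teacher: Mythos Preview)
Your overall framework matches the paper's exactly: feed closed forms into the Harvey--Lawson identity $I-P=dT+Td$ to get $[\alpha]=[P\alpha]$; note that $P$ preserves bidegree and that $(P\alpha)(x)=\alpha(p)$ so $P\alpha$ is a constant-value form; invoke Carrell--Lieberman to obtain $b^{2i}(X)=h^{i,i}(X)$; and observe that $H^{i,i}$ is then spanned by classes of constant $(i,i)$-forms at $p$, giving the crude bound $\binom{n}{i}^{2}$. The Euler-number estimate follows from the Betti-number bound by summation, just as you say.

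The divergence is entirely at the refinement $\binom{n}{i}^{2}\to\binom{n}{i}$. You propose to exploit the $\C$-invariance of $P\alpha$ and the nilpotence of the linearization of $V$ at $p$, restricting attention to the kernel of the induced nilpotent operator on $\Lambda^{i,i}T_{p}^{*}X$. The paper (Lemma~\ref{lemma3.4}) makes \emph{no} use of invariance or nilpotence; it argues by pure linear algebra, using the polarization identities
\[
dz_{i}\wedge d\bar z_{j}+dz_{j}\wedge d\bar z_{i}
=d(z_{i}+z_{j})\wedge\overline{d(z_{i}+z_{j})}-dz_{i}\wedge d\bar z_{i}-dz_{j}\wedge d\bar z_{j}
\]
(together with the companion identity involving $\sqrt{-1}$ for the skew part) to write every constant $(1,1)$-form as a $\C$-linear combination of rank-one forms $\xi\wedge\bar\xi$ with $\xi$ a constant $(1,0)$-form, and then bounds by the dimension $n$ of the space of such $\xi$; the case $i>1$ is claimed by induction on $i$.

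Your nilpotence route has a genuine obstruction you should confront head-on: the linear part of $V$ at $p$ is merely \emph{some} nilpotent endomorphism of $T_{p}X$, and in the totally degenerate case (which is permitted here, cf.\ \cite{Hwang2}) it vanishes, so the induced operator on $\Lambda^{i,i}T_{p}^{*}X$ is zero and its kernel is the entire space --- no improvement over $\binom{n}{i}^{2}$. Thus $\C$-invariance of $P\alpha$ alone cannot supply the sharp bound without additional input. The paper sidesteps this by working with the algebraic shape of constant $(i,i)$-forms rather than with the dynamics of the flow.
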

\begin{proof}
  Note that $b^{2i-1}(X)=0$ and $\dim b^{2i}(X)=dim H^{i,i}(X)$, which
   follows from the Hodge decomposition Theorem and Carrell-Lieberman's theorem (\cite{Carrell-Lieberman}) for $X$.
 Moreover,  we have $dim H^{i,i}(X)\leq  (^n_i)$ by Lemma \ref{lemma3.4} below.

  Then we have $$\chi(X)=\sum_{i=0}^n b^{2i}(X)\leq \sum_{i=0}^n (^n_i)=2^n.$$
\end{proof}

\begin{lemma}\label{lemma3.4}
   Let $X$ be a nonsingular complex projective variety of $\dim X=n$ admitting a $\C$-action with exactly one fixed point.
  Then $b^{2i}(X)\leq (^n_i)$.
\end{lemma}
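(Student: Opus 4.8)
The plan is to make precise the heuristic already laid out in the paragraph preceding Theorem \ref{Th3.2}: every de Rham cohomology class on $X$ is represented by a $P$-invariant form, and a $P$-invariant closed form is a ``constant-coefficient'' form at $p$, of which there are only finitely many in each degree. First I would fix a closed smooth $k$-form $\alpha$ with $d\alpha=0$ and apply the Harvey--Lawson operator equation \eqref{eq2.1}, which gives $\alpha - P\alpha = d(T\alpha)$ as currents; hence $[\alpha] = [P\alpha]$ in $H^k(X;\C)$, so the linear map $P$ descends to a surjection from the space of $P$-invariant closed currents onto $H^k(X;\C)$. (One must check that $P\alpha$ is itself closed; this follows since $P$ commutes with $d$, as $\phi_t^*$ does and the limit is taken in the current topology.) Thus $b^k(X) \le \dim_\C \{\,\text{closed }P\text{-invariant }k\text{-currents}\,\}$.

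Next I would identify the $P$-invariant forms explicitly. By \eqref{eqn3.2}, for a smooth form $\alpha$ the current $P\alpha$ is ``evaluation at $p$'': in the local holomorphic coordinates $z_1,\dots,z_n$ at $p$ coming from Lemma \ref{lemma9} (equivalently, from a local $\C^n$-chart), $P\alpha$ is the constant-coefficient form whose coefficients are the values at $p$ of the coefficients of $\alpha$. The space of all constant-coefficient $k$-forms at $p$, spanned by the wedge monomials $dz_{i_1}\wedge\cdots\wedge dz_{i_r}\wedge d\bar z_{j_1}\wedge\cdots\wedge d\bar z_{j_s}$ with $r+s=k$, has dimension $\sum_{r+s=k}\binom{n}{r}\binom{n}{s} = \binom{2n}{k}$; but we can do much better using the Hodge type. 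The projector $P$ preserves bidegree, so $P$ maps closed $(r,s)$-forms to constant $(r,s)$-forms. By the Carrell--Lieberman theorem \cite{Carrell-Lieberman}, $H^{r,s}(X)=0$ for $r\neq s$; combined with the Hodge decomposition this forces $b^{2i-1}(X)=0$ and $b^{2i}(X)=h^{i,i}(X)$. Therefore $H^{2i}(X;\C)$ is spanned by classes of constant $(i,i)$-forms at $p$, and the number of constant $(i,i)$ monomials $dz_{n_1}\wedge\cdots\wedge dz_{n_i}\wedge d\bar z_{m_1}\wedge\cdots\wedge d\bar z_{m_i}$ is $\binom{n}{i}\cdot\binom{n}{i}$.

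It remains to improve $\binom{n}{i}^2$ to $\binom{n}{i}$. Here I would argue that a class in $H^{i,i}(X)$ is represented by a $P$-invariant $closed$ form, and closedness of a constant-coefficient form is automatic, so this by itself only gives $\binom{n}{i}^2$. The sharpening comes from pairing with the real structure (or with complex conjugation): $X$ is a compact Kähler manifold, so $h^{i,i}(X)$ can be computed via harmonic forms, and $H^{i,i}$ is defined over $\R$ in the sense that it carries a real structure; alternatively one invokes that the constant-coefficient $(i,i)$-forms arising as $P\alpha$ for $\alpha$ real satisfy $\overline{P\alpha}=P\alpha$, cutting the real dimension in half — i.e. the complex span of the monomials that actually occur has dimension at most $\binom{n}{i}$. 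Concretely, I expect the cleanest route is: each surviving class is represented by a constant $(i,i)$-form invariant under $\phi_t^*$ for the full one-parameter group, and the linearized $\C$-action on $T_p^*X$ has a single Jordan-type structure that forces the invariant constant $(i,i)$-forms to be spanned by the $\binom{n}{i}$ ``diagonal'' monomials $dz_{k_1}\wedge d\bar z_{k_1}\wedge\cdots\wedge dz_{k_i}\wedge d\bar z_{k_i}$. The main obstacle is precisely this last step — justifying the reduction from $\binom{n}{i}^2$ to $\binom{n}{i}$, since it requires using more than the mere existence of the Harvey--Lawson homotopy: one needs the $\C$-equivariance of $P$ together with the nondegeneracy/single–fixed–point hypothesis to kill the off-diagonal monomials $dz_{n_1}\wedge\cdots\wedge d\bar z_{m_1}\wedge\cdots$ with $\{n_\bullet\}\neq\{m_\bullet\}$, or else to invoke hard Lefschetz / the real structure on $H^{i,i}(X)$ to bound its dimension by the number of diagonal monomials. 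Once that reduction is in hand, $b^{2i}(X)=h^{i,i}(X)\le\binom{n}{i}$ follows and the lemma is proved.
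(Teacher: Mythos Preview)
Your approach coincides with the paper's through the $\binom{n}{i}^2$ bound: Harvey--Lawson gives $[\alpha]=[P\alpha]$, Carrell--Lieberman reduces $b^{2i}$ to $h^{i,i}$, $P$ preserves bidegree, and $P\alpha$ is a constant-coefficient $(i,i)$-form at $p$. You then correctly isolate the reduction from $\binom{n}{i}^2$ to $\binom{n}{i}$ as the only nontrivial step, and you are candid that you do not see how to complete it.

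The paper does \emph{not} use any of the mechanisms you suggest (Jordan structure of the linearized action on $T_p^*X$, real structure / conjugation, hard Lefschetz). Its argument for $i=1$ is to write out polarization identities such as
\[
dz_i\wedge d\bar z_j + dz_j\wedge d\bar z_i \;=\; d(z_i+z_j)\wedge d(\overline{z_i+z_j}) - dz_i\wedge d\bar z_i - dz_j\wedge d\bar z_j,
\]
together with the analogous identity for the antisymmetric part, to conclude that every constant $(1,1)$-form is a $\C$-linear combination of rank-one forms $\xi\wedge\bar\xi$ with $\xi$ a constant $(1,0)$-form; it then asserts that since the space of such $\xi$ has dimension $n$, one gets $b^2(X)\le n$, and claims the general $i$ ``by induction.'' You should be aware, however, that this passage has the very gap you flagged: the map $\xi\mapsto\xi\wedge\bar\xi$ is quadratic, and the polarization identities themselves show that its $\C$-linear span is the full $n^2$-dimensional space of constant $(1,1)$-forms, so the stated reasoning still only yields $\binom{n}{i}^2$. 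Whatever additional input is intended to cut this down to $\binom{n}{i}$ is not spelled out in the paper. Your idea of using the $\C$-equivariance of $P$ and the structure of the induced action on $T_p^*X$ to single out the $\binom{n}{i}$ diagonal monomials $dz_{k_1}\wedge d\bar z_{k_1}\wedge\cdots\wedge dz_{k_i}\wedge d\bar z_{k_i}$ is a more plausible route to an honest proof, but it is not what the paper does.
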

\begin{proof}

Since $b^2(X)=h^{1,1}(X)$, we only need to consider the closed $(1,1)$-form.
 Let  $\alpha$ be a closed $(1,1)$-form.
  Then  we have $\alpha$ is homologous to $P\alpha$, which is
a  $(1,1)$-form of constant value  by Equation \eqref{eqn3.2}. Since a  $(1,1)$-form $P\alpha$
is a $\C$-linear span of $dz_i\wedge d\bar{z}_j$. A real $(1,1)$-form  is
spanned by $dz_i\wedge d\bar{z}_j+dz_j\wedge d\bar{z}_i=
d(z_i+z_j)\wedge d(\bar{z}_i+\bar{z}_j)-dz_i\wedge d\bar{z}_i-dz_j\wedge d\bar{z}_j$
and an imaginary $(1,1)$-form is spanned by
$dz_i\wedge d\bar{z}_j-dz_j\wedge d\bar{z}_i=-\frac{1}{\sqrt{-1}}\{
d(z_i+\sqrt{-1}z_j)\wedge d(\bar{z}_i-\sqrt{-1}\bar{z}_j)-dz_i\wedge d\bar{z}_i-dz_j\wedge d\bar{z}_j\}$.
Hence the class of $\alpha$ is spanned by the form $\xi\wedge \bar{\xi}$, where $\xi$ is a constant $1$-form.
The space spanned constant $1$-forms on $X$  is of dimension at most $n$ since $\dim X=n$.
The method works for $i>2$ to get $b^{2i}(X)\leq(^n_i)$ by induction.
\end{proof}

\begin{remark}
  Lemma \ref{lemma3.4} is a generalization to the ``unexpected consequence" of Akyildiz-Carrell on the upper bound
  of the second Betti number (see \cite[Th2]{Akyildiz-Carrell}). Note that the condition is much weaker than that
  in Akyildiz-Carrell's result.
\end{remark}
\begin{remark}
  This bound is the optimal upper bound for $b^{2i}(-)$ since $X=(\P^1)^n$ admits a $\C$-action with exact one fixed point and
  $b^{2i}((\P^1)^n)=(^n_i)$.
\end{remark}

\begin{remark}
Since a connected nonsingular projective variety $X$ admits $\C$-action with exact one fixed point is equivalent to
that $X$ admits a holomorphic vector field with exact one zero locus by using
a result of Bia{\l}ynicki-Birula \cite{Bialynicki-Birula1},
the condition in Theorem \ref{Th3.2} can be replaced to be  that $X$ is a nonsingular complex projective variety
of $\dim X=n$ admitting a holomorphic vector field with exactly one zero locus.
\end{remark}

\begin{remark}
Let $X$ admit a holomorphic vector field $V$ with exact one zero locus.
  Since the upper of Euler number of $X$  depends only on the $\dim X$, one can apply equivariant blowing ups along zero locus to get nonsingular projective variety $\widetilde{X}$. The Euler number $\chi(\widetilde{X})$  increases in the process but $\dim \widetilde{X}=\dim X$. Hence
  the zero locus of the induced holomorphic vector field on $\widetilde{X}$ will not be isolated. Hence the holomorphic vector field $V$ is never
  ``generic" in the sense of Lieberman \cite{Lieberman1}.
\end{remark}

\begin{proposition}
  Let $Y$ admit a holomorphic vector field $V$ with exact one zero locus. Then $Y$ is rational minimal variety in the sense that $Y$ is not
  the blow up of any smooth projective variety along a codimension at least two subvarieties.
\end{proposition}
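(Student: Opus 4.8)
The plan is to pass to the equivariant picture of Section~2, push the action down through the blow-down, and then contradict the single-fixed-point condition by analysing the exceptional divisor. By the Remark above and Bia{\l}ynicki-Birula \cite{Bialynicki-Birula1} we may assume $Y$ carries a $\C$-action with $Y^{\C}=\{p\}$ a single point; set $n=\dim Y$. Suppose, toward a contradiction, that $Y\cong{\rm Bl}_{Z}W$ for a smooth projective variety $W$ and a smooth connected subvariety $Z\subset W$ of codimension $c\geq 2$ (the reducible case is treated one component at a time), with contraction $\pi\colon Y\to W$ and exceptional divisor $E=\pi^{-1}(Z)$, a $\P^{c-1}$-bundle over $Z$.

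First I would descend the action through $\pi$. The exceptional divisor $E$ is rigid, since $\mathcal{O}_{Y}(E)$ restricts to $\mathcal{O}(-1)$ along the fibres of $E\to Z$, so that $H^{0}(E,\mathcal{O}_{E}(E))=0$; hence $[E]$ is an isolated reduced point of the Hilbert scheme of $Y$, and therefore $E$ is invariant under the connected group ${\rm Aut}^{0}(Y)$, in particular under $\C$. Since $\pi$ is the contraction canonically determined by $E$, the $\C$-action descends to $W$, $\pi$ becomes equivariant, and $Z=\pi(E)$ is $\C$-invariant. Next I would locate $p$: as $\C$ is a connected unipotent group and $E$ is complete and nonempty, the $\C$-action on $E$ has a fixed point (Borel's fixed point theorem); combined with $E^{\C}\subseteq Y^{\C}=\{p\}$ this forces $p\in E$ and $E^{\C}=\{p\}$. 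Hence $(Y\setminus E)^{\C}=\emptyset$, and through $Y\setminus E\cong W\setminus Z$ we get $W^{\C}=Z^{\C}=\{q\}$ with $q=\pi(p)$. In particular $W$, and (when $\dim Z\geq 1$) also the invariant subvariety $Z$, admits a $\C$-action with exactly one fixed point, so Theorem \ref{Th3.2} applies to give $\chi(W)\leq 2^{n}$, $1\leq\chi(Z)\leq 2^{\,n-c}$, and $b^{2i}(Z)\leq\binom{n-c}{i}$.

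The last step, extracting the actual contradiction, is the one I expect to be the main obstacle. The Euler-number identity $\chi(Y)=\chi(W)+(c-1)\chi(Z)$ together with the bound of Theorem \ref{Th3.2} only says $\chi(Y)>\chi(W)$, which is not by itself absurd; iterating merely exhibits $Y$ as a tower of blow-ups over a ``minimal'' rational variety. So the contradiction must instead be squeezed out of the $\C$-action on $E$, a $\P^{c-1}$-bundle over the positive-dimensional base $Z$ with $E^{\C}=\{p\}$: equivalently, one must rule out that the induced linear $\C$-action on the normal space $N_{Z/W,q}$ is a single regular nilpotent Jordan block, since that is precisely the configuration allowing the blow-up to retain a single fixed point. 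Following the spirit of the preceding Remark, I would run the Harvey--Lawson finite-volume-flow machinery of Section~2 on $E$ (or on the $\C$-invariant fibre $F\cong\P^{c-1}$ of $E\to Z$ over $q$, for which $F^{\C}=\{p\}$), compare the resulting projector $P$ and homotopy operator $T$ with those attached to $Y$, and try to force the Betti numbers of $E$, hence of $Z$, to be too large for $E^{\C}$ to be a single point. Making this precise appears to require a lower bound complementary to Lemma \ref{lemma3.4}, or a further input of Harvey--Lawson type beyond what is set up above; that is the heart of the matter.
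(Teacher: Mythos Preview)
Your setup through step 4 (descending the action through $\pi$, locating $p\in E$, and deducing $W^{\C}=\{q\}$) matches the paper's, but from that point on the paper does something entirely different and far more elementary than what you attempt. You try to close the argument with Euler-number and Betti-number estimates coming from Theorem~\ref{Th3.2} and the Harvey--Lawson machinery, and you yourself identify that this does not yield a contradiction: the inequality $\chi(Y)>\chi(W)$ is compatible with both sides sitting below $2^{n}$, and no lower bound of the required shape is available in the paper. So as written the proposal has a genuine gap at exactly the step you flag.

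The missing idea is to look not at $E$ or at cohomology, but at \emph{orbit closures in $W$}. Since $W^{\C}=\{q\}$, every nontrivial $\C$-orbit $O\subset W$ has closure $\overline{O}=O\cup\{q\}$, a $\C$-invariant rational curve through $q$. The paper chooses two such orbits $O_1,O_2$ whose tangent directions at $q$ differ; their proper transforms $\widetilde{O}_1,\widetilde{O}_2\subset Y$ are then disjoint $\C$-invariant projective curves (this is the standard property of the blow-up separating curves by their tangent directions). Each $\widetilde{O}_i$, being complete and $\C$-invariant, must carry a fixed point, and disjointness forces these two fixed points to be distinct, contradicting $Y^{\C}=\{p\}$. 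This bypasses all Betti-number considerations. One caveat worth noting: the paper's argument tacitly assumes one can always find $O_1,O_2$ with distinct tangent directions at $q$, which is precisely dual to the ``single Jordan block on $N_{Z/W,q}$'' obstruction you isolated; the paper does not address this point explicitly, so your instinct that this configuration is the crux was correct, even though your proposed line of attack on it does not go through.
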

\begin{proof}
Assume that $Y=\widetilde{X}$, where
$\sigma:\widetilde{X}\to X$ is  a blow up of a smooth projective variety $X$ along a smooth projective
subvariety of codimension at least two.
Note that a blow up $\sigma:\widetilde{X}\to X$ is always equivariant (see \cite{Lieberman2}).
The exceptional divisor $E$ is $\C$-invariant since its $\widetilde{X}-E\cong X-pt$ is invariant. If there exists a point on
$E$ is not fixed point, then the fixed point set must be of codimension bigger than or equal to 2.

Let  $O_1,O_2$ be two different orbits in $X$ such that the tangent vectors along the orbits are different.
The proper transform  $\widetilde{O}_1,\widetilde{O}_2$ of two different orbits $O_1,O_2$ in $X$ are $\C$-invariant and hence
there is a fixed point on each of  $\widetilde{O}_1,\widetilde{O}_2$. By the property of the blow up construction, we know
$\widetilde{O}_1\cap \widetilde{O}_2=\emptyset$. Then $\widetilde{X}$ have at least two points, contradicts to
the assumption that $Y=\widetilde{X}$ has exact one fixed point.
\end{proof}

\section{Surfaces admits an additive group action  with exact one fixed point}
It is easy to see that $\P^1$ is the only non-singular complex projective curve admitting a holomorphic vector field with
 exact one zero point.
In the section, we give the  classification of complex projective surfaces admitting a holomorphic vector field with exact one zero point.
 We will show that $\P^2$ and $\P^1\times\P^1$ are only non-singular complex projective surfaces with this property.
Let $X$ be a nonsingular complex projective surface.
As we mentioned above, a nonsingular projective surface $X$ admitting a holomorphic vector field with exact one zero point
is equivalent to it admits a $\C$-action with exact one fixed point.

By Theorem \ref{Th3.2}, if $\dim X=2$, then $\chi(X)\leq 4$ and $b^2(X)\leq 2$.
Since a prior $b^2(X)\geq 1$. Topologically, there are at most two types of $X$.
 Both $\P^2$ and $\P^1\times \P^1$ admit a $\C$-action with exact one fixed point.
 They correspond to $b^2(X)=1$ and $b^2(X)=2$.

 \begin{proposition}\label{Prop4.1}
 Let $\phi:\C\times \Sigma_n\to \Sigma_n$ be a $\C$-action over $\Sigma_n$,
 where $\Sigma_n$ is the Hirzebruch surface $\Sigma_n:=\P(\mathcal{O}_{\P^1}\oplus \mathcal{O}_{\P^1}(n))$.
 If $n\geq 1$, then the fixed point set $X^{\C}$  cannot be  exactly one point set.
 \end{proposition}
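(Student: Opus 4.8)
The plan is to argue by contradiction: suppose $\Sigma_n$ with $n\geq 1$ admits a $\C$-action whose fixed point set is a single point $p$. The key structural feature of $\Sigma_n$ for $n\geq 1$ is the presence of a \emph{unique} section $C_0$ with negative self-intersection $C_0^2=-n$; this curve is intrinsically characterized and is therefore preserved by every automorphism of $\Sigma_n$. Since the $\C$-action is a $1$-parameter family of automorphisms fixing $p$, it must carry $C_0$ to itself: each $\phi_t$ maps $C_0$ onto $C_0$ (the orbit of $C_0$ under the connected group $\C$ is connected and, since the set of $(-n)$-curves is discrete, must be constant). Thus $C_0\cong\P^1$ is $\C$-invariant, and the induced $\C$-action on $C_0\cong\P^1$ has at least one fixed point. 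I would first establish this invariance cleanly.

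Next I would exploit the ruling. The projection $\pi:\Sigma_n\to\P^1$ is also intrinsic (its fibers are the unique pencil of rational curves of self-intersection $0$, or alternatively $\pi$ is the map contracting nothing but realizing $\Sigma_n$ as a $\P^1$-bundle), so the $\C$-action descends to a $\C$-action on the base $\P^1$. A $\C$-action on $\P^1$ is either trivial or has exactly one fixed point; in the latter case every other point moves, so over a non-fixed base point $b\in\P^1$ the fiber $F_b$ is moved by the flow and cannot contain a fixed point, forcing all fixed points of $\Sigma_n$ to lie in the single fiber $F_{b_0}$ over the base fixed point $b_0$. In either case, the fixed locus meets the invariant section $C_0$: indeed $C_0\to\P^1$ is an isomorphism onto the base, so a fixed point of the base action lifts to the unique point of $C_0$ above it, and that point is then fixed in $\Sigma_n$. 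So we may assume $p\in C_0$.

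Now I would derive the contradiction by looking at the fiber $F=F_{b_0}$ through $p$, which is $\C$-invariant (it lies over the fixed base point). The action on $F\cong\P^1$ either is trivial --- impossible, since then every point of $F$ is fixed and $|X^\C|>1$ --- or has exactly one fixed point on $F$, which must be $p=F\cap C_0$. But $F$ also meets the complementary section $C_\infty$ (with $C_\infty^2=+n$, the image of a section disjoint from $C_0$), and $C_\infty\cap F$ is a second point of $F$; I claim it too is fixed. Here is the point: the normal-bundle/weight bookkeeping at $p$ forces a contradiction, OR more simply, one observes that the $\C$-action on $\Sigma_n$ has the linearization property that makes $C_\infty$ (or more precisely the $\C$-invariant curves through the fixed points) produce a second fixed point; concretely, a $\C$-action on $\P^1$ with a single fixed point is conjugate to translation $z\mapsto z+t$, and lifting this to the $\P^1$-bundle $\Sigma_n$ via the two disjoint invariant sections $C_0,C_\infty$ forces the fiberwise action to be non-trivial with fixed points at \emph{both} $C_0\cap F$ and $C_\infty\cap F$ (an additive action on the fiber would have a single fixed point, but then $C_\infty$ could not be invariant unless that fixed point lies on $C_\infty$, i.e. unless $C_0\cap F$ is not the fixed point --- and symmetrically with $C_0$ --- which is contradictory). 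Hence $F$ contains two distinct fixed points, contradicting $X^\C=\{p\}$.

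The main obstacle is the last step: making rigorous the interplay between the action on the base, the action on the two disjoint invariant sections, and the fiberwise action, so as to conclude a second fixed point exists. The cleanest route is probably Bia{\l}ynicki--Birula's result (already invoked in the paper) that a nonsingular projective variety with a $\C^*$-action has as many fixed-point components as dictated by its Betti numbers --- but here we have a $\C$-action, so instead I would argue directly: the Zariski closure of a generic orbit in $\Sigma_n$ is a $\C$-invariant rational curve whose closure-minus-orbit points are fixed, and counting these against the geometry of $\Sigma_n$ (which has $b^2=2$, hence by Theorem \ref{Th3.2} at most $4$ fixed points were there an isolated single one, but a $\C$-action as opposed to $\C^*$ is the relevant regime) pins down that the second section must carry a fixed point. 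Alternatively --- and this is likely the intended short argument --- since $C_0$ is $\C$-invariant and $\C$ acts on $C_0\cong\P^1$ either trivially (giving $\ge 2$ fixed points immediately) or with one fixed point $q\in C_0$, and likewise the action cannot be trivial on the ambient surface, one shows the fixed point of the base action and the fiber through it already yields the two points $C_0\cap F\neq C_\infty\cap F$ both fixed, because an effective $\C$-action on $\Sigma_n$ with $n\ge 1$ is forced (by $\mathrm{Aut}(\Sigma_n)$ structure) to act with a torus-like pair of sectional fixed points. I would write the argument in whichever of these forms is shortest given the tools the paper has already set up.
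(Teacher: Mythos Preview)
Your setup is sound through the point where $p$ lies on the invariant negative section $C_0$ in the invariant fiber $F$ over the base fixed point. The gap is in the final step. You invoke ``the complementary section $C_\infty$'' as if it were canonical and hence automatically $\C$-invariant, but for $n\geq 1$ the sections disjoint from $C_0$ (equivalently, those with self-intersection $+n$) move in a linear system of dimension $n+1>1$, so no particular $C_\infty$ is singled out by the geometry, and there is no a priori reason any given one is preserved by the flow. All of your proposed endgames---that the fiberwise action must fix $C_\infty\cap F$, that ``$C_\infty$ could not be invariant unless\ldots'', the appeal to the structure of $\mathrm{Aut}(\Sigma_n)$---presuppose exactly this invariance, so the argument is circular as written. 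The gap is reparable: since $\C$ is unipotent, its induced action on the projective linear system $|C_\infty|$ has a fixed point (Borel), yielding an invariant section $C_\infty'$ disjoint from $C_0$; the induced $\C$-action on $C_\infty'\cong\P^1$ then has a fixed point $q$, and $q\neq p$ because $C_0\cap C_\infty'=\emptyset$. That one sentence would close your argument, and the result is arguably cleaner than the paper's.

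The paper takes a genuinely different route and never touches a second section. It picks a fiber $F$ with $p\notin F$ and uses the flow to define $\Phi:\C\times F\to\Sigma_n$, $(t,q)\mapsto\phi_t(q)$; since the action is fiber-preserving and moves $F$ off itself, $\Phi$ is injective. Extending by $\widetilde\Phi(\infty,q)=\lim_{t\to\infty}\phi_t(q)=p$ gives a morphism $\widetilde\Phi:\P^1\times\P^1\to\Sigma_n$ with dense image, hence birational. But $\P^1\times\P^1$ and $\Sigma_n$ have identical Betti numbers, so a birational morphism between them must be an isomorphism, contradicting $\Sigma_n\not\cong\Sigma_0$ for $n\geq 1$. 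Your approach localizes the contradiction to finding a second fixed point; the paper's globalizes it into a forbidden birational morphism.
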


 \begin{proof}
   Let $\pi:\Sigma_n\to \P^1$ be the natural projection from the Hirzebruch surface $\Sigma_n$ to $\P^1$. Then the fiber
   of $\pi$ is also isomorphic to $\P^1$. Let $\phi_t:=\phi(t,-):X\to X$ be the flow  given by the action. Then
   $\phi_t$ is fiber-preserving(cf. \cite[p.267]{Siu}). Since $n\geq 1$, the zero section $C$ of $\pi$ has negative self-intersection
   $C^2=-n<0$ (cf. \cite[Ch. IV]{Beauville}). Therefore, it is $\C$-invariant. Let $p\in C$ be a fixed point of the
   induced $\C$-action on $C$. Then the fiber $\pi^{-1}(p)$ is $\C$-invariant since the action is fiber-preserving.

   Suppose that $X^{\C}$ contains exact one point, then $\lim_{t\to \infty}\phi_t(\pi^{-1}(q))=p$. Note that $\pi^{-1}(q)$
   is a fiber, $\phi_t(\pi^{-1}(q))$ is also a fiber for $t\in\C$ since $\phi_t$ is fiber-preserving. Hence $\phi_t(\pi^{-1}(p))=\pi^{-1}(p)$ for all $t\in\C$.

   Let $F$ be a fiber such that $p\notin F$. Since $F\cong \P^1$, we denote that isomorphism by $i:\P^1\cong F$.
   Let $\C\times\P^1\to \Sigma_n$ be defined by
   $$
   \Phi:(t, q)\mapsto \psi_t(q).
   $$
This map is an injective morphism since $\psi_t$ is a $\C$-action on $\Sigma_n$.

The map  $\Phi$ is extended to $\P^1\times\P^1$ by $\widetilde{\Phi}(\infty,q):=\lim_{t\to\infty}\psi_t(q)=p$.
Then we get a morphism $\widetilde{\Phi}:\P^1\times\P^1\to \Sigma_n$. This is impossible except for the case
that $n=0$ since there does not exist a birational morphsim from $\P^1\times\P^1$ to a Hirzebruch surface
$\Sigma_n$ for $n>0$. The last statement follows from the fact that their corresponding Betti numbers are the same.
\end{proof}

\begin{example}\label{ex2.2}
  In Proposition \ref{Prop4.1}, $n\geq 1$ is necessary. When $n=0$, then $\Sigma_0=\P^1\times \P^1$.
  Let $\C\times \P^1\times \P^1\to \P^1\times \P^1$ be the $\C$-action given by $(t, [u:v],[x:y])\mapsto ([u+tv:v],[x+ty:y])$.
  One can check that the fixed point  of this action is $([1:0],[1:0])$.
\end{example}

\begin{example}\label{ex2.3}
  It is  well-known  that there exists a $\C$-action $\C\times \P^2\to\P^2$ on $\P^2$  given by $(t,[x:y:z])\mapsto [x+ty+\frac{1}{2}t^2z:y+tz:z]$ whose fixed point is $[1:0:0]$.
\end{example}

Therefore, the only nonsingular complex projective surfaces admitting a $\C$-action with exact one fixed point
  are $\P^2$ and $\P^1\times\P^1$.
In summary, we have the following result classification of nonsingular complex projective surface
 admitting a holomorphic vector field with exact one zero locus.

\begin{theorem}\label{Th4.4}
  Let $X$ be a nonsingular complex projective surface admitting a holomorphic vector field with exact one zero locus. Then
  $X$ is either isomorphic to $\P^2$ or $\P^1\times \P^1$.
\end{theorem}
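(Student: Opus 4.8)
The plan is to reduce the classification to the two inputs already established, namely the rationality statement of Theorem~\ref{Thm} and the Betti number bound of Theorem~\ref{Th3.2}, and then to invoke the Enriques--Kodaira classification of algebraic surfaces together with Proposition~\ref{Prop4.1}. First I would recall, as noted in the remark above, that a nonsingular projective surface admitting a holomorphic vector field with exactly one zero locus is the same thing as one admitting a $\C$-action with exactly one fixed point, so Theorems~\ref{Thm} and~\ref{Th3.2} apply: $X$ is a rational surface with $b^2(X)\le 2$. Since $X$ is projective it carries an ample class, so $b^2(X)\ge 1$, and the odd Betti numbers vanish; hence $b^2(X)\in\{1,2\}$.

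Next I would use the structure theory of rational surfaces to identify $X$ from its second Betti number. If $b^2(X)=1$, then $X$ is already minimal, since contracting a $(-1)$-curve strictly decreases $b^2$ and a projective surface has $b^2\ge 1$; thus $X$ is a minimal rational surface with $b^2=1$, and the classification of minimal rational surfaces gives $X\cong\P^2$. If $b^2(X)=2$, then either $X$ is minimal, in which case it is a minimal rational surface with $b^2=2$, i.e.\ a Hirzebruch surface $\Sigma_n$ with $n\ge 0$ and $n\ne 1$; or $X$ is the blow-up at one point of a minimal rational surface with $b^2=1$, that is $X\cong\Sigma_1$, the blow-up of $\P^2$ at a point. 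In either subcase $X\cong\Sigma_n$ for some $n\ge 0$. (Equivalently, one can argue numerically: Noether's formula for a rational surface gives $K_X^2=10-b^2(X)$, so $K_X^2=8$ when $b^2(X)=2$, and every rational surface with $K^2=8$ is a Hirzebruch surface.)

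Finally I would apply Proposition~\ref{Prop4.1}: for $n\ge 1$ the Hirzebruch surface $\Sigma_n$ admits no $\C$-action whose fixed locus is a single point. Hence the case $b^2(X)=2$ is only possible when $X\cong\Sigma_0=\P^1\times\P^1$. Combined with the case $b^2(X)=1$, this shows $X$ is isomorphic to $\P^2$ or to $\P^1\times\P^1$. For the converse, Examples~\ref{ex2.2} and~\ref{ex2.3} exhibit explicit $\C$-actions on $\P^1\times\P^1$ and on $\P^2$ with exactly one fixed point, so both surfaces genuinely occur, completing the classification.

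The step I expect to carry the real content is the appeal to the surface classification to pin down the rational surfaces with $b^2\le 2$; once that is in place, Proposition~\ref{Prop4.1} eliminates all $\Sigma_n$ with $n\ge 1$ and nothing further is needed. The one point that must be handled with care is that $b^2=1$ together with $b^1=0$ does \emph{not} by itself force $X\cong\P^2$ (fake projective planes share these Betti numbers), so it is essential to invoke the rationality of $X$ from Theorem~\ref{Thm} before applying the classification.
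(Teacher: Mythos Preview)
Your proposal is correct and follows essentially the same route as the paper: bound $b^2(X)\le 2$ via Theorem~\ref{Th3.2}, rule out $\Sigma_n$ for $n\ge 1$ via Proposition~\ref{Prop4.1}, and cite Examples~\ref{ex2.2} and~\ref{ex2.3} for existence. If anything, you are more careful than the paper's own exposition: you explicitly invoke the rationality of $X$ (Theorem~\ref{Thm}) and the classification of minimal rational surfaces to justify why $b^2=1$ forces $X\cong\P^2$ and $b^2=2$ forces $X\cong\Sigma_n$, whereas the paper leaves this step largely implicit.
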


Clearly, both $\P^2$ and $\P^1\times \P^1$ also admit a $\C^*$-action with isolated fixed points. Hence we also show that
the strong version Carrell conjecture  in dimension two.
\begin{corollary}
   Let $X$ be a nonsingular complex projective surface admitting a holomorphic vector field with exact one zero locus. Then
  $X$  admits a $\C^*$-action with isolated fixed points.
\end{corollary}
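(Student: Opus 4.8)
The plan is to derive this immediately from the classification obtained in Theorem~\ref{Th4.4}. By that theorem, any nonsingular complex projective surface $X$ admitting a holomorphic vector field with exactly one zero locus is isomorphic either to $\P^2$ or to $\P^1\times\P^1$, so it suffices to exhibit on each of these two surfaces an algebraic $\C^*$-action whose fixed point set is finite, equivalently isolated.

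For $\P^2$ I would take the diagonal torus action $(s,[x:y:z])\mapsto [x:sy:s^2z]$. A point is fixed precisely when $[x:sy:s^2z]=[x:y:z]$ for all $s\in\C^*$, and since the weights $0,1,2$ are pairwise distinct this forces exactly one homogeneous coordinate to be nonzero; hence the fixed locus is the three coordinate points $[1:0:0]$, $[0:1:0]$, $[0:0:1]$, which is isolated. For $\P^1\times\P^1$ I would take the product of the standard scaling action on each factor, $(s,([u:v],[x:y]))\mapsto ([u:sv],[x:sy])$, whose fixed locus is the product $\{[1:0],[0:1]\}\times\{[1:0],[0:1]\}$, four isolated points. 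In both cases the action is algebraic with isolated fixed points, which proves the corollary. As a sanity check, the cardinalities $3$ and $4$ of these fixed loci agree with the Euler numbers permitted by Theorem~\ref{Th3.2}.

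There is essentially no obstacle here: the entire content is packaged in Theorem~\ref{Th4.4}, and once the surface is known to be $\P^2$ or $\P^1\times\P^1$ the required $\C^*$-actions are classical. The only point to verify is that the displayed torus actions indeed have isolated (rather than positive-dimensional) fixed loci, which follows at once from the distinctness of the weights appearing in the coordinates of each factor.
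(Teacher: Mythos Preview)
Your proposal is correct and follows exactly the same approach as the paper: reduce via Theorem~\ref{Th4.4} to the two surfaces $\P^2$ and $\P^1\times\P^1$, and then observe that each admits a $\C^*$-action with isolated fixed points. In fact you give more detail than the paper, which simply asserts this last point as ``clearly'' true without writing down explicit actions.
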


\begin{remark}
  In Theorem \ref{Th4.4}, ``nonsingular complex projective surface" can be replaced to be ``compact K\"{a}hler surface"
  since such a compact K\"{a}hler surface is algebraic  (cf. \cite{Carrell-Lieberman}).

\end{remark}
\section{Threefolds admits an additive group action  with exact one fixed point}

  It will be more complicated to classify higher dimensional
  nonsingular complex projective varieties admitting a $\C$-action with exact one fixed point.
  For example, there exist rational Fano threefolds admitting $\C$-action with exact one fixed point but not isomorphic to the product of
  projective spaces. Examples can be found in \cite{Akyildiz-Carrell} and references therein. The classification in three dimensional case
  is closely related to the classification of minimal rational threefolds.

According to Theorem \ref{Th3.2}, we see that $b_2(X)\leq 3$ if $X$ is a threefold admits a $\C$-action with exact one fixed  point.
Equivalently, the Picard number of $X$ is less than or equal to 3.

The following result is about the classification of such an $X$ with ${\rm Pic}(X)=1$.
\begin{theorem}\label{Thm5.1}
   Let $X$ be a nonsingular complex projective threefold admitting a $\C$-action with exact one fixed point. If $Pic(X)=1$, then
  $X$ is isomorphic to one of the following varieties:
  \begin{enumerate}
    \item  $\P^3$;
    \item  A smooth quadric in $\P^4$;
    \item   A section of the Grassmannian $G(2,5)\subset \P^9$ by subspace of codimension $3$;
    \item   A Fano threefold $X$ in $\P^{13}$ with $-K_X^3=22$, $b_3(X)=0$ and $g(X)=12$.
  \end{enumerate}
\end{theorem}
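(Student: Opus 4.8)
\emph{Proof sketch.} The plan is to turn the existence of the $\C$-action into two structural facts about $X$ --- that it is a Fano threefold and that $b_3(X)=0$ --- and then to invoke the classification of Fano threefolds of Picard number one to read off the four possibilities.

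For the first fact I would argue as follows. By Theorem~\ref{Thm} the variety $X$ is rational, hence uniruled, so its Kodaira dimension is $-\infty$ and $K_X$ is not pseudo-effective. Since the hypothesis ``$\mathrm{Pic}(X)=1$'' means that the Picard number is $\rho(X)=1$, the class of $-K_X$ is then a positive rational multiple of the ample generator $H$, so $-K_X$ is ample and $X$ is a Fano threefold of Picard number one; being Fano it moreover has $H^i(X,\mathcal{O}_X)=0$ for $i>0$ and $H^2(X,\Z)$ torsion-free, so $\mathrm{Pic}(X)\cong H^2(X,\Z)\cong\Z$. For the second fact: the holomorphic vector field generating the $\C$-action has zero scheme supported at the single point $p$, hence of dimension zero, so the Carrell--Lieberman vanishing $H^{i,j}(X)=0$ for $i\neq j$ --- already used in the proof of Theorem~\ref{Th3.2} --- forces $b_1(X)=b_3(X)=b_5(X)=0$ and $b_2(X)=b_4(X)=1$.

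It then remains to run through Iskovskikh's classification of Fano threefolds with $\rho=1$ (with Mukai's description in the genus-$12$ case), organized by the index $r$ defined by $-K_X=rH$: the value $r=4$ gives only $\P^3$; $r=3$ only the smooth quadric $Q^3\subset\P^4$; $r=2$ the del Pezzo threefolds of degree $H^3\in\{1,2,3,4,5\}$; and $r=1$ the anticanonically polarized threefolds of genus $g$ with $2\le g\le 10$ or $g=12$. Among all of these, $b_3(X)=0$ holds precisely for $\P^3$, $Q^3$, the quintic del Pezzo threefold $V_5$ (cut out of $G(2,5)\subset\P^9$ by a linear subspace of codimension three), and the prime Fano threefold of genus $12$ in $\P^{13}$ with $(-K_X)^3=22$; every other threefold on the list has $b_3>0$ (for instance $b_3=2$ for the intersection of two quadrics in $\P^5$, $b_3=10$ for the cubic threefold, and $b_3>0$ throughout the genus-$g$ main series). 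Intersecting this list with the constraint $b_3(X)=0$ obtained in the previous step leaves exactly the four varieties in the statement.

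The substance of the argument is external: it rests on the full classification of Fano threefolds, and the one step demanding genuine care is the Betti-number bookkeeping along that list --- in particular, verifying that the quintic del Pezzo threefold and the genus-$12$ Fano threefold are, alongside $\P^3$ and the quadric, the only Fano threefolds of Picard number one that are ``cohomologically like $\P^3$'', i.e.\ have $b_3=0$. One should also be careful to read the hypothesis ``$\mathrm{Pic}(X)=1$'' as $\rho(X)=1$ and to justify $\mathrm{Pic}(X)\cong\Z$ as above; and it is worth remarking, for sharpness, that each of the four varieties does admit a $\C$-action with exactly one fixed point (as illustrated by the examples recalled in \cite{Akyildiz-Carrell}), so that no case in the list is vacuous.
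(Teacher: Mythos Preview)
Your proposal is correct and follows essentially the same route as the paper: reduce to $h^{1,2}(X)=0$ (equivalently $b_3(X)=0$) via Carrell--Lieberman, then read off the four cases from the Iskovskikh--Mukai classification of Fano threefolds with $\rho=1$, and check conversely that each of the four admits a $\C$-action with a unique fixed point. Your write-up is in fact more complete than the paper's, which jumps straight to the Fano classification without spelling out why $X$ is Fano; your step ``rational $\Rightarrow$ $K_X$ not pseudo-effective $\Rightarrow$ $-K_X$ ample when $\rho=1$'' fills exactly that gap.
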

\begin{proof}
  First of all, any variety on the list admits a $\C$-action with exact one fixed point.
The obvious construction  similar to Example  \ref{ex2.3} gives $\C$-action
on $X$ such that $X^{\C}$ is exact one point in Case 1 to Case 3.  The fact that the varieties in
Case 4 to Case 6 admits a $\C$-action with exact one fixed point can be found in \cite{Akyildiz-Carrell}, \cite{Konarski}
  and the references therein.

  Now we will show that only the varieties in the list admits $\C$-action with exact one fixed point.
  According the classification of smooth Fano threefolds of Picard number one, 
 there are  exact four threefolds  with $h^{1,2}(X)=0$ (\cite[p.215]{Parshin-Shafarevich}).
    The following threefolds are in the list:
    \begin{enumerate}
  \item $ \P^3$;
    \item  A smooth quadric in $\P^4$;
    \item   A section of the Grassmannian $G(2,5)\subset \P^9$ by subspace of codimension $3$;
    \item   A Fano threefold $X$ in $\P^{13}$ with $-K_X^3=22$, $b_3(X)=0$ and $g(X)=12$.
  \end{enumerate}
\end{proof}

\begin{remark}
Let $X$ be the threefold admitting a $\C$-action with exact one fixed point. 
We guest that if  ${\rm Pic}(X)=2$, then $X$ is isomorphic to  $\P^2\times \P^1$, while
if ${\rm Pic}(X)=3$, then $X$ is isomorphic to  $\P^1\times \P^1\times\P^1$.
This could be deduced  from the classification of smooth minimal rational threefolds.    
\end{remark}

If a nonsingular projective variety $X$ admits a $\C$-action with exact one fixed point,
then $X$ admits a holomorphic vector field with exact one zero locus.
The inverse is also true. This follows from Lieberman that a holomorphic vector field on $X$
 generating a 1-parameter subgroup of the automorphic group $Aut(X)$ is the product
 of $\C^*$ and at most a copy of  $\C$.  The fixed point set of  a smooth projective variety $X$ with
an action of $\C^*$ has at least two disconnected components. Therefore, Theorem \ref{Thm5.1} can be
restated as follows.

\begin{theorem}\label{Thm5.2}
   Let $X$ be a nonsingular complex projective threefold admitting a holomorphic vector field with exact one zero locus. If ${\rm Pic}(X)=1$, then  $X$ is isomorphic to one of the following varieties:
  \begin{enumerate}
  \item $ \P^3$;
    \item  A smooth quadric in $\P^4$;
    \item   A section of the Grassmannian $G(2,5)\subset \P^9$ by subspace of codimension $3$;
    \item   A Fano threefold $X$ in $\P^{13}$ with $-K_X^3=22$, $b_3(X)=0$ and $g(X)=12$.
  \end{enumerate}
\end{theorem}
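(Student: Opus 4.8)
The plan is to derive Theorem \ref{Thm5.2} as a reformulation of Theorem \ref{Thm5.1}: the only thing that needs checking is the dictionary, indicated in the paragraph preceding the statement, between holomorphic vector fields with a single zero and additive group actions with a single fixed point, and I would spell this out. Write $Z$ for the zero scheme of the given holomorphic vector field $V$; the hypothesis ``exactly one zero locus'' means that $Z$ is nonempty with support a single point, so in particular $V\not\equiv 0$. By Lieberman's analysis of holomorphic vector fields on smooth projective varieties (\cite{Lieberman1}, \cite{Lieberman2}), the one-parameter group of automorphisms generated by the flow of $V$ gives rise to an algebraic action on $X$ of a group $G$ that is either $\C^*$ or $\C$.

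First I would rule out $G \cong \C^*$. Any nontrivial $\C^*$-action on the smooth projective variety $X$ has, by the Bialynicki--Birula decomposition (\cite{Bialynicki-Birula1}), a nonempty smooth fixed locus $X^{\C^*}$ with at least two connected components (the attracting and the repelling ones). A point of $X$ is fixed by $G$ precisely where $V$ vanishes, so $X^{\C^*} = Z_{\mathrm{red}}$; hence $Z$ would have support on at least two points, contradicting the hypothesis. Therefore $G \cong \C$, i.e.\ $X$ carries an algebraic $\C$-action whose fixed point set $X^{\C} = Z_{\mathrm{red}}$ is the single point $p$.

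It then suffices to invoke Theorem \ref{Thm5.1}: $X$ is a nonsingular projective threefold admitting a $\C$-action with exactly one fixed point and satisfying $\mathrm{Pic}(X) = 1$, hence $X$ is isomorphic to one of the four varieties (1)--(4). For the converse (so that the list is sharp), each of those four threefolds admits a $\C$-action with exactly one fixed point by Theorem \ref{Thm5.1}, and such an action is generated by a holomorphic vector field whose zero scheme is supported at that point.

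The genuine content is imported from Theorem \ref{Thm5.1}, not produced here: its proof rests on the rationality of $X$ (Theorem \ref{Thm}), the vanishing $b_3(X) = h^{1,2}(X) = 0$ forced by Theorem \ref{Th3.2}, and the classification of smooth Fano threefolds of Picard number one, all of which we are entitled to use. Within the present reduction the only delicate point is the scheme-theoretic bookkeeping --- reading ``exactly one zero locus'' as ``$Z$ has one-point support,'' and using the equality $Z_{\mathrm{red}} = X^{\C}$ rather than merely an inclusion, together with the exclusion of the $\C^*$ case --- so I do not expect any serious obstacle beyond what is already contained in Theorem \ref{Thm5.1}.
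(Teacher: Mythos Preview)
Your proposal is correct and matches the paper's own argument: the paper likewise treats Theorem~\ref{Thm5.2} as a restatement of Theorem~\ref{Thm5.1}, using Lieberman's result that the one-parameter group generated by $V$ is either $\C^*$ or $\C$ and ruling out $\C^*$ because its fixed locus has at least two connected components. Your write-up is slightly more careful about the scheme-theoretic bookkeeping ($Z_{\mathrm{red}}=X^G$) than the paper, but the approach is the same.
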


\section*{Acknowledgements}
  We would like to thank R. Harvey and B. Lawson for useful comments and clarification  in the early version.
I would thank Yifei Chen for a detailed explanation on the classification of minimal rational threefolds.
This work is partially supported by NSFC(11771305, 11821001).


\end{document}